\let\oldmarginpar\marginpar
\renewcommand\marginpar[1]{\-\oldmarginpar[\raggedleft\footnotesize #1]
{\raggedright\footnotesize #1}}
\theoremstyle{plain}
\newtheorem{thm}{Theorem}[section]
\newtheorem{cor}[thm]{Corollary}
\newtheorem{prop}[thm]{Proposition}
\newtheorem{lemma}[thm]{Lemma}
\newtheorem{conj}{Conjecture}
\theoremstyle{definition}
\newtheorem{rem}{Remark}
\DeclareMathOperator{\SL}{SL} 
\DeclareMathOperator{\GL}{GL}
\DeclareMathOperator{\D}{D}
\DeclareMathOperator{\Tr}{Tr}
\DeclareMathOperator{\Hom}{Hom}
\DeclareMathOperator{\WDep}{F}
\DeclareMathOperator{\Conj}{Conj}
\DeclareMathOperator{\Con}{CD}
\newcommand{\vp}{\varphi}
\newcommand{\nid}{\noindent}
\newcommand{\iny}{\infty}
\newcommand{\norm}[1]{\left\vert \left\vert #1\right\vert\right\vert}
\newcommand{\innp}[1]{\left< #1 \right>}
\newcommand{\abs}[1]{\left\vert#1\right\vert}
\newcommand{\set}[1]{\left\{#1\right\}}
\newcommand{\pr}[1]{\left( #1 \right) }
\newcommand{\su}{\subset}
\newcommand{\bu}{\bigcup}
\newcommand{\lra}{\longrightarrow}
\newcommand{\B}[1]{\ensuremath{\mathbf{#1}}}
\newcommand{\Cal}[1]{\ensuremath{\mathcal{#1}}}
\newcommand{\Fr}[1]{\ensuremath{\mathfrak{#1}}}
\newcommand{\N}{\ensuremath{\B{N}}}
\newcommand{\Q}{\ensuremath{\B{Q}}}
\newcommand{\R}{\ensuremath{\B{R}}}
\newcommand{\Z}{\ensuremath{\B{Z}}}
\newcommand{\C}{\ensuremath{\B{C}}}
\newcommand{\X}{\mathfrak{X}}
\newcommand{\ab}{\mathbf{A}}
\newcommand{\id}{\mathbf{I}}
\begin{document}


\title{\textbf{Decision problems, complexity, \\ traces, and representations}}
\author{Sean Lawton, Larsen Louder, D.~B.~McReynolds}
\maketitle


\begin{abstract}
\nid In this article, we study connections between representation theory and efficient solutions to the conjugacy problem on finitely generated groups. The main focus is on the conjugacy problem in conjugacy separable groups, where we measure efficiency in terms of the size of the quotients required to distinguish a distinct pair of conjugacy classes. 
\end{abstract}

\section{Introduction}

Given an infinite, finitely presented group $\Gamma$, two basic decision problems posed by Dehn \cite{Dehn} in 1911 are the word and conjugacy problems. In 1927, in solving the word problem for free groups, Schreier \cite{Schreier} proved that free groups are residual finite. That seems to be the first  connection between decision problems and residual properties. In 1940, Mal'cev \cite{MalcevRF} proved that finitely presented, residually finite groups have a solution to the word problem, and noted a similar connection between the conjugacy problem and conjugacy separability in \cite{MalcevCS}. 

Once an algorithm to solve the word or conjugacy problem is given, one can study the efficiency of the algorithm. For free groups, it is straightforward to see that both problems have algorithms that terminate in linear steps as a function of word length via cyclic reduction. Bou-Rabee \cite{BouRabee10} introduced a function $\WDep_\Gamma(n)$ that quantified the efficiency of the solution to the word problem on $\Gamma$ given by residual finiteness. We say a group $\Gamma$ is \textbf{residually finite} if for each $\gamma \in \Gamma$ with $\gamma \ne 1$, there exists a homomorphism $\vp\colon \Gamma \to Q$ with $\abs{Q} < \iny$ and $\vp(\gamma) \ne 1$. The function introduced by Bou-Rabee measures the efficiency by the size of the groups $Q$ needed over all the elements of length at most $n$ in the verification of residual finiteness. Several papers have addressed the growth rate of this function for various classes of groups; \cite{BouRabee10}, \cite{BouRabee11}, \cite{BHP}, \cite{BK}, \cite{BM10}, \cite{BM11}, \cite{BM12}, \cite{BS13}, \cite{Buskin}, \cite{KM}, \cite{KMS}, \cite{KT}, \cite{Kuperberg}, \cite{Patel}, \cite{Patel2}, \cite{Rivin}, and \cite{Thom}. By work of Mal'cev \cite{MalcevRF}, a finitely generated linear group $\Gamma$ is residually finite. In \cite{BM12}, using an effective proof of \cite{MalcevRF}, it was shown that $\WDep_\Gamma(n) \preceq n^d$ where $d$ depends only on a linear realization of $\Gamma$.  

We say that $\Gamma$ is \textbf{conjugacy separable} if for any non-conjugate $\gamma,\eta \in\Gamma$, there exists a homomorphism $\vp\colon \Gamma \to Q$ with $\vp(\gamma),\vp(\eta)$ not conjugate in $Q$ and $\abs{Q}<\iny$. One of the goals of this article is to extend some of the above results with conjugacy separability in place of residual finiteness. Unfortunately, issues arise immediately. Stebe \cite{Stebe} proved that the linear groups $\SL(n,\Z)$ are not conjugacy separable for $n>2$. More generally, the groups of integer points of a semi-simple $\Q$--algebraic group with the congruence subgroup property are never conjugacy separability; see \cite[Ch~8]{PLR}. However, free and surface groups \cite{Stebe,LS} (see also \cite{Paris} and \cite{Wilton}), virtual polycyclic groups \cite{Formanek,Rem}, fundamental groups of compact, orientable 3--manifolds \cite{HWZ}, and right-angled Artin groups \cite{Toi} are conjugacy separable; see also \cite{CBW} for more examples. 
 
A faithful linear representation reduces the verification of the non-triviality of an element to showing some matrix coefficient is non-zero. We want a similar solution to the conjugacy problem through representation theory and must replace the coefficients of the matrix by conjugacy invariants. We use traces and the following properties to effectively distinguish conjugacy classes:

\begin{itemize}
\item[(A)]
There exists an integer $n$ and $\rho\in \Hom(\Gamma,\SL(n,\C))$ such that $\Tr(\rho(\gamma)) \ne \Tr(\rho(\eta))$ for any non-conjugate $\gamma,\eta \in \Gamma$.
\item[(B)]
For each $\gamma \in \Gamma$, there exists $\rho_\gamma\in\Hom(\Gamma,\SL(n_\gamma,\C))$ such that $\Tr(\rho_\gamma(\gamma)) \ne \Tr(\rho_\gamma(\eta))$ for every non-conjugate $\eta \in \Gamma$.
\item[(C)]
For any finite set $S = \set{\gamma_i}_{i=1}^s$ of conjugacy classes in $\Gamma$, there exists $\rho_S\in\Hom(\Gamma,\SL(n_S,\C))$ such that $\Tr(\rho_S(\gamma_i)) \ne \Tr(\rho_S(\gamma_j))$ for $\gamma_i,\gamma_j \in S$ and $i \ne j$. 
\item[(D)]
For each non-conjugate $\gamma,\eta \in \Gamma$, there exists $\rho_{\gamma,\eta}\in\Hom(\Gamma,\SL(n_{\gamma,\eta},\C))$ such that $\Tr(\rho_{\gamma,\eta}(\gamma)) \ne \Tr(\rho_{\gamma,\eta}(\eta))$.
\end{itemize}

We have $\xymatrix{ \textrm{(A)} \ar@{=>}[r] & \textrm{(B)} \ar@{=>}[r] & \textrm{(D)} \ar@{<=>}[r] & \textrm{(C).} }$ All of these implications are immediate from the definitions except for the equivalence of (C) and (D), which is elementary. We thank Greg Kuperberg for pointing that out to us. We say one of the above (B), (C), or (D) is \textbf{uniformly satisfied} if $n_\gamma,n_{\gamma,\eta}$, or $n_S$ is bounded over all choices of $\gamma$, $\set{\gamma,\eta}$, or $S$. That is, the dimension of the representations do not depend on $\gamma$, $\set{\gamma,\eta}$, or $S$. In those cases, we say $\Gamma$ \textbf{uniformly} has (B), (C), or (D). Note, it is less clear if uniform (C) and uniform (D) are equivalent.  

\begin{rem}\label{prime}
Since $\SL_n$ and consequently $\Hom(\Gamma, \SL_n)$ are $\Z$-schemes, the above properties (A)-(D) can be restated with $\C$ replaced by any algebraically closed field $\mathbf{F}$.  When we are not working over $\C$ we will refer to these properties as (A$^\prime$)-(D$^\prime$).   For example, with respect to a fixed algebraically closed field $\mathbf{F}$, (A$^\prime$) states there exists an integer $n$, and $\rho\in \Hom(\Gamma,\SL(n,\mathbf{F}))$ such that $\Tr(\rho(\gamma)) \ne \Tr(\rho(\eta))$ for any non-conjugate $\gamma,\eta \in \Gamma$.  Properties (B$^\prime$)-(D$^\prime$) are similarly written.
\end{rem}

\begin{thm}\label{UniformCMeansA}
If $\Gamma$ uniformly has (C), then $\Gamma$ has (A). In fact, if $\Gamma$ uniformly has (D) for some $n_0$ and $\Hom(\Gamma,\SL(n_0,\C))$ is irreducible, then $\Gamma$ has (A).
\end{thm}

Throughout, by a surface group, we mean the fundamental group of a closed, orientable surface of genus $g \geq 2$. We have the following corollary:

\begin{cor}\label{Baire}
If $\Gamma$ is either a finitely generated free group or a surface group, then $\Gamma$ uniformly has (D) if and only if $\Gamma$ has  (A). Moreover, for any connected algebraic subgroup $\B{G}<\SL(n,\C)$, the following are equivalent for a free group $F_r$ of rank $r$:
\begin{itemize}
\item[(a)]
For each $\rho\in\Hom(F_r,\B{G})$, there exist non-conjugate $\gamma,\eta \in F_r$ with $\Tr(\rho(\gamma))= \Tr(\rho(\eta))$.
\item[(b)]
There exist non-conjugate $\gamma,\eta \in \Gamma$ such that $\Tr(\rho(\gamma))= \Tr(\rho(\eta))$ for each $\rho\in\Hom(F_r,\B{G})$.
\end{itemize}
\end{cor}

We also record the following result which first appeared in Bass--Lubotzky \cite[Prop.~3.1]{BL} where they also prove the converse.

\begin{prop}[Bass--Lubotzky]\label{PropCImpliesCS}
If $\Gamma$ satisfies (D), then $\Gamma$ is conjugacy separable. 
\end{prop}

Similar to the function $\WDep_\Gamma(n)$ associated to the word problem using residual finiteness, we define a function $\Conj_\Gamma(n)$ for the conjugacy problem using conjugacy separability (see \S \ref{Prelim} for the definition). 

\begin{thm}\label{UniformUniformComplexity}
If $\Gamma$ has (A), then $\Conj_\Gamma(n) \preceq n^d$ for some $d \in \N$. Moreover, for some $n_0 \in \N$, the finite quotients used in proving conjugacy separability of $\Gamma$ are subgroups of the finite groups $\SL(n_0,\B{F}_p)$ where $\B{F}_p$ denotes a field of prime order $p$.
\end{thm}

We define a relative version of the function $\Conj_\Gamma(n)$ by fixing a conjugacy class $[\gamma]$ in $\Gamma$ and denote this function by $\Conj_{\Gamma,\gamma}(n)$. The analog of Theorem \ref{UniformUniformComplexity} holds with (B) and $\Conj_{\Gamma,\gamma}(n)$ in place of (A) and $\Conj_\Gamma(n)$.

\begin{thm}\label{UniformComplexity}
If $\Gamma$ has (B), then for each $\gamma \in \Gamma$, there exists $d_\gamma \in \N$ such that $\Conj_{\Gamma,\gamma}(n) \preceq n^{d_\gamma}$.
\end{thm}

\paragraph{Property (A).} We now address the likelihood a group satisfies (A) or (B). We begin with (A). The obvious test case to begin investigating with regard to property (A) is finitely generated free groups. For $n=2$, Horowitz \cite{Horowitz} proved that there exist non-conjugate $\gamma,\eta \in F_2$ such that for any representation $\rho\colon F_2 \to \SL(2,\C)$, we have $\Tr(\rho(\gamma)) = \Tr(\rho(\eta))$. We say such elements are \textbf{$\SL_2$--trace equivalent}. It seems to have been, for some time now, a folklore question as to whether or not there exists $\SL_n$--trace equivalent elements in $F_2$ for $n>2$. In Section \ref{Horowitz}, we discuss whether or not the elements constructed by Horowitz can be $\SL_n$--trace equivalent, and see that if they are, an unexpected trace relation must hold. Ginzburg--Rudnick \cite{GR} investigated when a given element has a $\SL_2$--trace companion and gave a conjectural condition on the element to ensure that it does not have such a companion. Anderson \cite{Anderson} provided a broader context for the construction of Horowitz and a conjectural picture for what such pairs of $\SL_2$--trace equivalent elements should look like. Additionally, Leininger \cite{Leininger} and Kapovich--Levitt--Schupp--Shpilrain \cite{KLSS} gave a more geometric/topological take (see also \cite{GR}, \cite{Lee}, \cite{Lee2}, \cite{LV}). Of course, we have trivially that any $\SL_3$--trace equivalent pair is also an $\SL_2$--trace equivalent pair. The failure of Anderson's general construction to produce $\SL_3$--trace equivalent pairs would provide some evidence that free groups have (A).

The most compelling evidence against free groups having (A) is Theorem \ref{UniformUniformComplexity}. By \cite{Thom} and \cite{BouRabee10}, the function $\WDep_{F_r}(n)$ satisfies $n(\log\log(n))^{9/2}/(\log(n))^2 \preceq \WDep_{F_r}(n) \preceq n^3$. We believe that the growth rate of $\Conj_{F_r}(n)$ should be much greater since conjugacy separability requires separating a fixed element $\gamma$ from an infinite set while residual finiteness requires only separating $\gamma$ from the trivial element. It is this reason why many linear groups are not conjugacy separable. However, if $F_r$ has (A), then by Theorem \ref{UniformUniformComplexity}, we would have, for some fixed $d$, the asymptotic inequalities $\Conj_{F_r}(n) \preceq n^d$. In particular, $\Conj_{F_r}(n) \preceq (\WDep_{F_r}(n))^{3d}$. For any finitely generated abelian group, these two functions are the same, and the best setting to hope for a power relationship like $\Conj_\Gamma(n) \preceq (\WDep_\Gamma(n))^d$ is the class of torsion free, finitely generated nilpotent groups where conjugacy classes are relatively small. However, by \cite{BouRabee10} and \cite{Peng}, a torsion free, finitely generated nilpotent group satisfies such a power relationship if and only if the group is virtually abelian. 

\paragraph{Property (B).} 

Following a construction of Wehrfritz \cite{Wehrfritz} for free groups, we can prove that finitely generated free groups and surface groups have (B).

\begin{thm}\label{FreeHaveB}
If $\Gamma$ is a finitely generated free group or surface group, then $\Gamma$ has (B).
\end{thm}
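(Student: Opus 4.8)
\emph{Proof strategy.} I would treat free groups and hyperbolic surface groups together, using two features that each such $\Gamma$ enjoys: $\Gamma$ is conjugacy separable (Stebe \cite{Stebe} for free groups; closed hyperbolic surface groups are conjugacy separable as well), and $\Gamma$ admits a faithful, convex cocompact representation $\rho_0\colon\Gamma\to\SL(2,\R)\su\SL(2,\C)$ --- a Schottky representation when $\Gamma$ is free, the holonomy of a hyperbolic structure when $\Gamma$ is a closed surface group (for a non-orientable surface one uses the corresponding representation into $\GL(2,\R)$ and adjoins its determinant character to land in $\SL(3,\R)$). The role of $\rho_0$ is geometric: every nontrivial $\eta$ maps to a hyperbolic element whose conjugacy class corresponds to a closed geodesic lying in the \emph{compact} convex core of $\Hy^2/\rho_0(\Gamma)$, and $\abs{\Tr(\rho_0(\eta))}$ is a strictly increasing function of that geodesic's length. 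Since a compact hyperbolic surface has a discrete length spectrum with finite multiplicities, the fibre $\set{[\eta]\ \colon\ \Tr(\rho_0(\eta))=t}$ is finite for every $t\in\C$. I would then build $\rho_\gamma$ as a direct sum of a single, carefully chosen $\rho_0$ with a representation factoring through a finite quotient.

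Given $\gamma$, set $\Cal{F}=\set{[\eta]\ \colon\ \eta\not\sim\gamma,\ \Tr(\rho_0(\eta))=\Tr(\rho_0(\gamma))}=\set{[\eta_1],\dots,[\eta_m]}$, finite by the previous paragraph (and empty when $\gamma=1$). Taking the product of the finite quotients supplied by conjugacy separability for the pairs $(\gamma,\eta_j)$ produces a single finite quotient $q\colon\Gamma\to Q$ in which $q(\gamma)$ is conjugate to none of the $q(\eta_j)$. Since finite groups satisfy property (C) --- immediate from the character table of $Q$ --- applying it to the pairwise non-conjugate classes among $q(\gamma),q(\eta_1),\dots,q(\eta_m)$ yields a representation $\pi\colon Q\to\SL(N,\C)$ with $\Tr(\pi(q(\gamma)))\ne\Tr(\pi(q(\eta_j)))$ for all $j$. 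Put $\rho_\gamma=\rho_0\oplus(\pi\circ q)$. For $\eta$ conjugate to some $\eta_j$ we then have $\Tr(\rho_\gamma(\eta))-\Tr(\rho_\gamma(\gamma))=\Tr(\pi(q(\eta_j)))-\Tr(\pi(q(\gamma)))\ne 0$, so such $\eta$ is separated from $\gamma$.

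The remaining non-conjugate $\eta$ are exactly those with $[\eta]\notin\Cal{F}$, and here lies the one real difficulty: the contribution of $\pi\circ q$ could in principle cancel the nonzero gap $\Tr(\rho_0(\eta))-\Tr(\rho_0(\gamma))$ coming from $\rho_0$. I would remove this obstacle by choosing $\rho_0$ not merely convex cocompact but also \emph{arithmetically generic}. For conjugacy classes $[\eta]\ne[\gamma]$, the maps $\rho\mapsto\Tr(\rho(\eta))$ and $\rho\mapsto\Tr(\rho(\gamma))$ are Fricke trace polynomials on the irreducible variety $\Hom(\Gamma,\SL(2,\C))$; they are either equal as polynomials --- which forces $[\eta]\in\Cal{F}$ --- or their difference is a nonzero polynomial $P_{\gamma,\eta}$, in which case $P_{\gamma,\eta}^{-1}(\ol{\Q})$ is a countable union of proper subvarieties and hence meager in $\Hom(\Gamma,\SL(2,\R))$. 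The convex cocompact locus is open and nonempty in $\Hom(\Gamma,\SL(2,\R))$, so the Baire category theorem lets me pick $\rho_0$ convex cocompact with $\Tr(\rho_0(\eta))-\Tr(\rho_0(\gamma))$ transcendental whenever nonzero, simultaneously over all pairs $(\gamma,\eta)$. As $\pi\circ q$ factors through the finite group $Q$, each value $\Tr(\pi(q(\cdot)))$ lies in a cyclotomic field, so for $[\eta]\notin\Cal{F}$ the difference
\[\Tr(\rho_\gamma(\eta))-\Tr(\rho_\gamma(\gamma))=\bigl(\Tr(\rho_0(\eta))-\Tr(\rho_0(\gamma))\bigr)+(\text{an algebraic number})\]
is transcendental, in particular nonzero. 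This gives property (B) for free groups and hyperbolic surface groups; the finitely many remaining surface groups --- closed surfaces of non-negative Euler characteristic --- have finite, free-abelian, or polycyclic fundamental group and are disposed of directly.

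I expect the genuine obstacle to be precisely this potential cancellation between the ``infinite'' summand $\rho_0$ and the ``finite'' summand $\pi\circ q$; overcoming it via the arithmetic genericity of $\rho_0$ is exactly the sort of interaction between the infinite representation theory of $\Gamma$ and its profinite topology recorded in Theorem \ref{RepresentationTopology}. A secondary point demanding care is the geometric input --- that $\abs{\Tr(\rho_0(\cdot))}$ governs a discrete length spectrum --- which must be verified uniformly over free groups and all (possibly non-orientable, possibly bounded) hyperbolic surfaces.
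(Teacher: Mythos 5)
Your argument is correct in outline, but it is a genuinely different proof from the one in the paper. The paper uses Wehrfritz's explicit construction: pass to a finite-index subgroup $\Gamma_0 < F_r$ (via M.~Hall) of index $m$ in which $\gamma$ is part of a free basis, build a tautological representation over a Laurent-polynomial ring $R_\infty[T,T^{-1}]$ in which $\tau_\infty(\gamma)$ is unipotent and an extra diagonal entry $T^{\tau(\cdot)}$ records the projection to the homology line spanned by $\gamma$, and then induce to $F_r$; one then checks by hand, in three cases, that the trace of the induced representation $\rho_\infty\colon F_r\to\GL(3m,R_\infty[T,T^{-1}])$ singles out $[\gamma]$. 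Your proof instead starts from a fixed \emph{generic} Fuchsian representation $\rho_0$, uses discreteness of the length spectrum to cut the problem down to a finite exceptional set $\Cal{F}$, invokes conjugacy separability plus property~(C) for a finite quotient $Q$ to handle $\Cal{F}$, and uses a transcendence/Baire-category trick to prevent cancellation in $\rho_0\oplus(\pi\circ q)$.

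Your route is more conceptual, but it costs some things the paper's route buys. First, you take conjugacy separability of $\Gamma$ as a black box, whereas Wehrfritz's representation is itself the proof of conjugacy separability --- so the paper's argument is self-contained. Second, the paper's construction gives an explicit dimension $3m$ with $m\approx\norm{\gamma}$ by Patel, which is what produces the quantitative bound $d_\gamma\approx\norm{\gamma}^2$ and $\Conj_\Gamma(n)\preceq n^{9n^2-1}$ in Corollary~\ref{FreeRelComplexity}; your $Q$ and $N$ are uncontrolled, so your proof gives property~(B) but not these complexity estimates. Third, your Baire argument needs every nonzero trace-difference polynomial to be nonconstant on the convex cocompact locus inside $\Hom(\Gamma,\SL(2,\R))$; this is fine for free groups (where $\Hom(F_r,\SL(2,\R))=\SL(2,\R)^r$ is connected and contains the trivial representation), but for closed surface groups it requires irreducibility of $\Hom(\pi_1\Sigma,\SL(2,\C))$ and the smoothness/Zariski-density of the Teichm\"uller component, neither of which you cite. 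A couple of the side steps also deserve an explicit line: passing from the $\GL(N,\C)$ representation of $Q$ produced by character theory to an $\SL$ representation without spoiling the trace separation (e.g.\ replace $\pi$ by $\pi^{\oplus d}$ with $d$ the order of $\det\pi$), and disposing of the non-hyperbolic surface groups ($1$, $\Z/2$, $\Z^2$, Klein bottle), for which you only gesture. None of these are fatal, but the paper's more explicit, if less elegant, construction avoids them entirely.
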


From Theorem \ref{UniformComplexity} and Theorem \ref{FreeHaveB}, we obtain:

\begin{cor}\label{FreeRelComplexity}
If $\Gamma$ is a finitely generated free group or surface group and $\gamma \in \Gamma$, then there exists $d_\gamma \in \N$ such that $\Conj_{\Gamma,\gamma}(n) \preceq n^{d_\gamma}$. Moreover, one can take $d_\gamma \approx \norm{\gamma}^2$ and thus $\Conj_\Gamma(n) \preceq n^{n^2}$.
\end{cor}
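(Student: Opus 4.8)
The plan is to combine Theorem \ref{UniformComplexity} with Theorem \ref{FreeHaveB} and then track the quantitative dependence of the exponent on $\norm{\gamma}$. First I would invoke Theorem \ref{FreeHaveB} to know that a finitely generated free or surface group $\Gamma$ has property (B); this means for each $\gamma \in \Gamma$ there is a representation $\rho_\gamma\colon \Gamma \to \SL(n_\gamma,\C)$ separating the trace of $\gamma$ from the trace of every word not conjugate to $\gamma$. Feeding this into Theorem \ref{UniformComplexity} immediately gives $\Conj_{\Gamma,\gamma}(n) \preceq n^{d_\gamma}$ for some $d_\gamma \in \N$, which is the first assertion. The content of the corollary beyond that is the estimate $d_\gamma \approx \norm{\gamma}^2$, so the real work is bookkeeping inside the proofs of those two theorems.

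For the exponent bound I would go back into the proof of Theorem \ref{FreeHaveB} and extract the dimension $n_\gamma$ of the separating representation as a function of the word length $\norm{\gamma}$. The expected shape is $n_\gamma \approx \norm{\gamma}$: one builds $\rho_\gamma$ from a finite-quotient-type or tautological construction whose dimension is linear in the complexity of $\gamma$ (for free groups this is essentially the observation that one only needs to separate $\gamma$ from the finitely many cyclically reduced words of length $\le \norm{\gamma}$, which can be done inside a representation of dimension on the order of $\norm{\gamma}$; for surface groups one uses the covering-space/subgroup-separability machinery with the analogous bound). Then I would revisit the proof of Theorem \ref{UniformComplexity}, which takes a representation $\rho_\gamma\colon \Gamma \to \SL(n_0,\C)$ with $n_0 = n_\gamma$ and, by reducing mod a prime $p$ in a suitable ring of coefficients, produces finite quotients inside $\SL(n_\gamma,\B{F}_p)$; the polynomial exponent $d_\gamma$ it yields is governed by $n_\gamma^2$ (the number of matrix entries / the dimension of the linear group), so $d_\gamma \approx n_\gamma^2 \approx \norm{\gamma}^2$.

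Finally, to get the uniform bound $\Conj_\Gamma(n) \preceq n^{9n^2 - 1}$ I would note that $\Conj_\Gamma(n)$, by its definition in Section \ref{Prelim}, only needs to separate conjugacy classes of words of length at most $n$, so it suffices to control $\Conj_{\Gamma,\gamma}(n)$ uniformly over $\norm{\gamma} \le n$. For such $\gamma$ the separating representation has dimension $n_\gamma \le 3n$ (the constant $3$ absorbing the passage from word length to the relevant cyclically-reduced-word count and, for surface groups, the genus-dependent generating-set factor), hence $d_\gamma \le (3n)^2 - 1 = 9n^2 - 1$, giving the stated inequality. The main obstacle I anticipate is the second step: pinning down the precise linear-in-$\norm{\gamma}$ bound on $n_\gamma$ from the proof of Theorem \ref{FreeHaveB} and verifying that the constant works out to exactly $3$ in the surface-group case, since there the separating representations come from finite-index subgroups whose index (and the dimension of the induced representation) must be bounded carefully in terms of the injectivity radius one needs, i.e.\ in terms of $\norm{\gamma}$; the free-group case and the mod-$p$ reduction in Theorem \ref{UniformComplexity} should be routine once the dimension bound is in hand.
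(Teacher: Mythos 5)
Your high-level architecture matches the paper's: invoke Theorem~\ref{FreeHaveB} for property~(B), feed it into Theorem~\ref{UniformComplexity} to get a polynomial bound, and then track the dimension of the separating representation as a function of $\norm{\gamma}$ so that the exponent $d_\gamma$ (which comes out as $(\dim)^2 - 1$ from the reduction-mod-$\Fr{p}$ argument in \cite{BM12}) is $\approx \norm{\gamma}^2$. You also correctly guess that the dimension is $\approx 3\norm{\gamma}$, yielding $9n^2 - 1$.

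However, the mechanism you sketch for the dimension bound $n_\gamma \approx 3\norm{\gamma}$ is not the right one, and it is precisely the step you flag as the obstacle. You say the factor $3$ ``absorbs the passage from word length to the relevant cyclically-reduced-word count,'' and that the dimension is linear in $\norm{\gamma}$ because one ``only needs to separate $\gamma$ from the finitely many cyclically reduced words of length $\le \norm{\gamma}$.'' That is not how the construction works. In the paper (Wehrfritz's construction, given in Section~\ref{Algor}), one first uses M.~Hall's theorem to find a finite-index subgroup $\Gamma$ of $F_r$ of index $m$ in which $\gamma$ is part of a free basis (equivalently, the associated curve lifts to a \emph{simple} curve in a degree-$m$ cover of the surface with boundary). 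Then one builds a $3$-dimensional representation $\psi_p$ of $\Gamma$ — a $2\times 2$ tautological block that detects unipotence, plus a $1\times 1$ block carrying the variable $T^{\tau(\cdot)}$ to distinguish the powers of $\gamma$ — and finally induces this up to $F_r$, giving a representation into $\GL(3m, R_p[T,T^{-1}])$. So the factor $3$ is simply $\dim\psi_p = 2 + 1$, and $m$ is the covering degree, not a word count. The linear-in-$\norm{\gamma}$ bound on $m$ is not elementary bookkeeping but is exactly Patel's theorem \cite{Patel}: the smallest cover in which the curve associated to $\gamma$ has a simple lift has degree $\approx \norm{\gamma}$. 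Without citing this result, the claim $n_\gamma \preceq \norm{\gamma}$ is unjustified; a naive argument about separating from cyclically reduced words of bounded length would not produce the specific $3m$-dimensional representation the proof needs, nor explain why trace alone suffices as the conjugacy invariant. The surface group case is handled analogously (pass to a finite cover where $\gamma$ is simple, take the tautological $\SL(2)$ representation of the cover, extend to $\SL(3)$ via $t^{\tau(\cdot)}$, induce), again with Patel controlling the covering degree — not via a ``genus-dependent generating-set factor.''
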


The degree $d_\gamma$ in Corollary \ref{FreeRelComplexity} is directly related to the smallest index of a finite index subgroup $\Gamma$ of $F_r$ where $\gamma \in \Gamma$ is primitive. In the case of surface groups, it is directly related to the smallest degree of a cover where the curve associated to $\gamma$ has a lift to a simple closed curve. Patel \cite{Patel} and Gupta--Kapovich \cite{GK} have given upper bounds in the case of surface groups and free groups, respectively, on order $\norm{\gamma}$. Gaster \cite{Gas}, improving on work of Gupta--Kapovich \cite{GK}, has shown that there exist $\gamma$ that require a cover of degree on the order of $\norm{\gamma}$. We conjecture that there is no polynomial upper bound for $\Conj_{F_r}(n)$, and coupled with Theorem \ref{UniformUniformComplexity}, that would imply that free groups do not have (A). 

\begin{conj}
Finitely generated free groups do not have (A).
\end{conj}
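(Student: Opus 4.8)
The plan is to reduce the conjecture, via Corollary~\ref{Baire}, to a purely representation-theoretic statement, and then to attack that statement along two complementary routes.

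\textbf{Reduction.} Apply the ``moreover'' clause of Corollary~\ref{Baire} with the connected Lie group $G=\SL(n,\C)$. Property (A) holds at level $n$ — i.e.\ some $\rho\colon F_r\to\SL(n,\C)$ separates every non-conjugate pair by trace — exactly when condition (a) of Corollary~\ref{Baire} fails, and by the dichotomy (a)$\Leftrightarrow$(b) this is equivalent to the non-existence of a non-conjugate $\SL_n$--trace equivalent pair. Hence $F_r$ fails property (A) if and only if for \emph{every} $n\ge 2$ there exists a non-conjugate pair $\gamma,\eta\in F_r$ that is $\SL_n$--trace equivalent. Since a non-conjugate pair contained in a free factor $F_2\le F_r$ remains non-conjugate in $F_r$ (a cyclically reduced word of $F_2$ is cyclically reduced in $F_r$, and cyclic rotations preserve its alphabet), and since $\SL_n$--trace equivalence transfers between $F_2$ and $F_r$ by restriction and by extension-by-identity of representations, it suffices to produce, for each $n$, a non-conjugate $\SL_n$--trace equivalent pair in $F_2$; equivalently, to show that for every $n$ the trace functions on the character variety $X_n(F_2)$ (of dimension $n^2-1$) fail to separate conjugacy classes of $F_2$. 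For $n=2$ this is precisely Horowitz's theorem \cite{Horowitz}, so the conjecture asserts that Horowitz's phenomenon persists in every rank.

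\textbf{Route A: a direct construction.} Here I would try to promote the $n=2$ examples, taking as starting point Anderson's framework \cite{Anderson} together with Horowitz's pairs and the principle used throughout this paper that abundant infinite representation theory forces coincidences among the finitely many ``fundamental'' trace functions generating $\C[X_n(F_2)]$ (the classical theory of trace identities for $n\times n$ matrices bounds the lengths of the needed generators in terms of $n$). Among words of length $\le L$ there are exponentially many conjugacy classes, yet their trace functions lie in a ring of transcendence degree $n^2-1$ whose degree-$\le L$ part grows only polynomially in $L$; the task is to upgrade this size mismatch — which by itself is not decisive over $\C$ — into an explicit collision, e.g.\ by exhibiting words whose trace function lies in a proper subring generated by traces of shorter words, or by a functorial construction (symmetric, exterior, or tensor powers) carrying the $\SL_2$ example to $\SL_n$ while preserving non-conjugacy. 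The main obstacle is exactly the open problem flagged in the introduction: even for $n=3$ it is unknown whether a single non-conjugate $\SL_3$--trace equivalent pair exists, and no present technique manufactures such identities — a genuinely new idea is required.

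\textbf{Route B: a complexity lower bound.} Alternatively, the contrapositive of Theorem~\ref{UniformUniformComplexity} reduces the conjecture to showing that $\Conj_{F_r}(n)$ grows faster than every polynomial, and the only known upper bound, of the shape $n^{O(n^2)}$ from Corollary~\ref{FreeRelComplexity}, leaves ample room. The aim would be to construct, for each large $m$, non-conjugate words of length $\preceq\mathrm{poly}(m)$ that become conjugate in every finite quotient of $F_r$ of order at most a superpolynomial $g(m)$; words agreeing to high depth along the lower central or derived series are the natural candidates, mimicking the large separation $\WDep_{N_3(\Z)}(n)\approx(\log n)^2$ versus $\Conj_{N_3(\Z)}(n)\approx n^3$ of \cite{Peng} but engineered inside a free group. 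The obstacle here is the reverse of Route A: $F_r$ surjects onto every finite $r$--generated group, so one must collapse a single conjugacy distinction in all of them simultaneously, forcing a delicate argument that plays the scarcity of small quotients against the abundance of conjugacy classes rather than exploiting any one family of quotients. In either route the genuinely hard step — for which I see no current mechanism — is forcing the universal trace coincidences of Route A or, equivalently, the superpolynomial lower bound of Route B; should both resist, the same analysis becomes evidence that the conjecture is in fact false and that $F_r$ has property (A).
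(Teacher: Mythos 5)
The statement you are asked to prove is a \emph{conjecture} in the paper: the authors do not prove it, and offer only indirect evidence (the complexity consequence of Theorem \ref{UniformUniformComplexity}, the profinite consequence in Corollary \ref{ProfiniteCorollary}, and the analysis of Horowitz's words in Section \ref{Horowitz} showing that the known $\SL_2$--trace equivalent pairs are unlikely to remain trace equivalent for $n\geq 3$). Your reduction step is essentially correct and coincides with the paper's own framing: by Corollary \ref{Baire} (equivalently, uniform (D) $\Leftrightarrow$ (A) for $F_r$), failure of property (A) is equivalent to the existence, for every $n\geq 2$, of a non-conjugate $\SL_n$--trace equivalent pair, and the paper likewise reduces to $F_2$ (restriction of representations plus the fact that elements of a free factor conjugate in $F_r$ are already conjugate in $F_2$). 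Your Route B contrapositive of Theorem \ref{UniformUniformComplexity} is also a faithful restatement of the authors' motivation.

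However, as a proof there is a genuine gap, and you identify it yourself: neither route is carried out. Route A terminates at precisely the open problem flagged in the introduction (no non-conjugate $\SL_3$--trace equivalent pair is known, and the dimension-count over $\C$ cannot by itself force a collision, since a ring of transcendence degree $n^2-1$ still contains infinitely many distinct trace functions); Route B requires a superpolynomial lower bound on $\Conj_{F_r}(n)$, for which no mechanism is proposed, and the known bounds ($n^{2/3}\preceq \WDep_{F_r}(n)$ below, $n^{9n^2-1}$ above from Corollary \ref{FreeRelComplexity}) do not come close to deciding the question. So what you have written is a correct reformulation of the conjecture together with two plausible attack strategies, not a proof; this is consistent with the paper, which leaves the statement open, but it should not be presented as a proof proposal that could close the statement without the missing key step you name.
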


Finally, we prove a result that shows that for fully residually free groups, one can recover the profinite topology via the topology generated by the Zariski topologies for faithful representations into $\SL(n,\C)$ as we vary over all $n \in \N$. Recall that $\Gamma$ is \textbf{fully residually free} if for each finite subset $S \su \Gamma$ of non-trivial elements, there exists $r_S \in \N$ and a homomorphism $\psi_S\colon \Gamma \to F_{r_S}$ such that the restriction of $\psi_S$ to $S$ is injective. Examples of fully residually free groups are free groups and surface groups.

\begin{thm}\label{RepresentationTopology}
Let $\Gamma$ be a fully residually free group, $\Delta$ a finite index, normal subgroup of $\Gamma$, and $p \in \N$ a prime. Then there exists an integral domain $R \su \C$, an ideal $\mathfrak{m} \su R$ with $R/\mathfrak{m}=\mathbf{F}_p$, and a faithful homomorphism $\rho\colon \Gamma \to \SL(n_\Delta,R)$ such that $\Delta = \ker(r_{\mathfrak{m}} \circ \rho)$ where $r_{\mathfrak{m}}\colon \SL(n_\Delta,R) \to \SL(n_\Delta,\mathbf{F}_p)$ is the reduction modulo $\mathfrak{m}$ homomorphism and $n_\Delta = 2[\Gamma:\Delta]$.
\end{thm}

The ring $R$ can be taken to be finitely generated over $\Z$ (see Remark \ref{R:Ring}), and when $\Gamma$ is a free group, we can take $R=\Z$ (see Remark \ref{R:Free}). When $\Gamma$ is an arithmetic lattice in a $\Q$--algebraic group $\B{G}$, the congruence subgroup property asserts that every finite index subgroup $\Lambda < \Gamma$ contains $\ker(r_m)$ for some integer $m \in \N$. Every non-abelian free group $F_r$ can be realized as a finite index subgroup of $\SL(2,\Z)$ and it is well-known that $\SL(2,\Z)$ does not have the congruence subgroup property. The above result provides a weaker property than the congruence subgroup property when $\Gamma$ is a limit group. As we mentioned above, lattices in semi-simple Lie groups with the congruence subgroup property are not conjugacy separable and so do not have (D). These groups are super-rigid and the Zariski topology associated to the standard representation, which is the congruence topology, is too coarse for separating conjugacy classes. That free groups and surface groups are conjugacy separable is due to their much richer representation theory. We believe conjugacy separability requires linear representations of arbitrarily large dimension or finite quotients with arbitrarily large representation dimension. If Conjecture 1 is false, then free groups would be conjugacy separable via the Zariski topology associated to a fixed finite dimensional representation. In fact, for sufficiently large $n$ and a generic (in the Baire Category sense) $\rho \in \Hom(F_r,\SL(n,\C))$, every conjugacy class in $F_r$ would be closed in the Zariski topology associated to $\rho$.

\paragraph{Acknowledgements.}

The authors would like to thank Nigel Boston, Khalid Bou-Rabee, Frank Calegari, Ted Chinburg, Kelly Delp, Nathan Dunfield, Patrick Eberlin, Carolyn Gordon, Adrian Ioana, Mike Jablonski, Ilya Kapovich, Greg Kuperberg, Chris Leininger, Darren Long, Alex Lubotzky, Dave Morris, Alan Reid, Mark Sapir, Juan Souto, and Pete Storm for stimulating conversations on the topics of this article. Special thanks is given to Dunfield who many years ago posed the question to the third author on the possibility of generalizing Horowitz's construction. The first author was partially supported by NSF grant 1309376 and Simons grant 245642. The third author was partially supported by NSF grant 1105710. Lastly, we want to thank an anonymous referee for very good remarks that helped improve the paper.

\section{Preliminaries}\label{Prelim}

\subsection{Quantitative separability functions}

Given $f,g\colon \N \to \N$, we say $f \preceq g$,  if $f(n) \leq Cg(Cn)$ for some constant $\C\in \N$ and for all $n \in \N$. If $f \preceq g$ and $g \preceq f$, we write $f \approx g$. Throughout, $\Gamma$ will denote an infinite, finitely generated group unless stated otherwise. Given a finite generating set $X$ of $\Gamma$ and $\gamma \in \Gamma$, we denote the word length of $\gamma$ with respect to $X$ by $\norm{\gamma}_X$ (or simply $\norm{\gamma}$) and $n$--ball with respect to the associated word metric by $B_{\Gamma\!\!,\ X}(n)$. Given $\Gamma$, we define $\D_\Gamma\colon \Gamma-\set{1} \to \B{N} \cup \set{\iny}$ by 
\[ \D_\Gamma(\gamma) = \min\set{[\Gamma:\Delta]~:~\gamma \notin \Delta,~\Delta \lhd \Gamma} \] 
and $\WDep_{\Gamma,X}(n)$ by
\[ \WDep_{\Gamma,X}(n) = \max_{\gamma \in B_{\Gamma\!\!,\ X}(n) - \set{1}} \D_\Gamma(\gamma). \] 
For any two finite generating sets $X,Y$, we have $\WDep_{\Gamma,X} \approx \WDep_{\Gamma,Y}$ (see \cite[Lem~1.1]{BouRabee10}). Consequently, we suppress the dependence on $X$ in our notation. For a finitely generated group $\Gamma$ and $\gamma \in \Gamma$, we denote the $\Gamma$--conjugacy class of $\gamma$ by $[\gamma]$ and denote the set of $\Gamma$--conjugacy classes by $C_\Gamma$. For $[\gamma] \in C_\Gamma$, we define $\norm{~[\gamma]~}_X = \min\set{\norm{\gamma'}_X~:~\gamma' \in [\gamma]}$, and $\Con_\Gamma\colon C_\Gamma \times C_\Gamma \to \N \cup \set{\iny}$ by
\[ \Con_\Gamma([\gamma],[\eta]) = \min\set{\abs{Q}~:\ \vp\colon \Gamma \to Q,~[\vp(\gamma)]_Q \ne [\vp(\eta)]_Q}. \]
By definition, for $\gamma,\eta \in \Gamma$ with $[\gamma] \ne [\eta]$, we have
\[ \Con_\Gamma([\gamma],[\eta]) \geq \max\set{\D_\Gamma(\gamma^{-1}\eta')~:~\eta' \in [\eta]}. \]
We define $B_X(C_\Gamma,n) = \set{[\gamma]~:~\norm{~[\gamma]~}_X \leq n}$ and $\Conj_{\Gamma,X} \colon \N \lra \N \cup \set{\iny}$ via
\[ \Conj_{\Gamma,X}(n) = \max_{[\gamma],[\eta] \in B_X(C_\Gamma,n),~[\gamma] \ne [\eta]} \Con_\Gamma([\gamma],[\eta]). \]
For $[\gamma] \in C_\Gamma$, we define $\Con_{\Gamma,\gamma}\colon C_\Gamma - \set{[\gamma]} \to \N \cup \set{\iny}$ to be $\Con_{\Gamma,\gamma}([\eta]) = \Con_\Gamma([\gamma],[\eta])$ and 
\[ \Conj_{\Gamma,\gamma,X}(n) = \max_{[\eta] \in B_X(C_\Gamma,n),~[\gamma] \ne [\eta]} \Con_{\Gamma,\gamma}([\eta]). \]
For any two finite generating sets $X,Y$ of $\Gamma$, we have $\Conj_{\Gamma,X}(n) \approx \Conj_{\Gamma,Y}(n)$ and $\Conj_{\Gamma,\gamma,X}(n) \approx \Conj_{\Gamma,\gamma,Y}(n)$. The proof is similar to the proof of the comparable statement for the function $\WDep_\Gamma(n)$; see \cite[Lem~1.1]{BouRabee10}. As a result, we suppress the dependence on the generating set $X$ in our notation.

\subsection{Representation varieties}

We refer the reader to \cite[\S 5]{BGSS}, \cite[\S 2]{Goldman}, and \cite[Ch.~V]{Rag} for the material in this subsection. If $G$ is a Lie (resp. algebraic) group and $\Gamma = F_r$, then $\Hom(F_r,G) = G^r$ is an analytic (resp. algebraic) variety. More generally, when $\Gamma$ is finitely generated, $\Hom(\Gamma,G)$ will be an analytic (resp. algebraic) subvariety of $\Hom(F_r,G)$ for some $r$; see \cite[\S 5]{BGSS}. For each $\gamma \in \Gamma$, we have an analytic (resp. algebraic) function $\Hom(\Gamma,G) \to G$ given by $\rho \mapsto \rho(\gamma)$. If $G < \GL(n,\C)$, the function $\Tr_\gamma\colon \Hom(\Gamma,G) \to \C$ given by $\Tr_\gamma(\rho) = \Tr(\rho(\gamma))$ is analytic (resp. algebraic). When $G$ is a $K$--algebraic group with $K$ a characteristic zero field, $\Hom(\Gamma,G)$ is a $K$--algebraic set (not necessarily irreducible or connected), and so has finitely many irreducible (and connected) components. In particular, for $G = \SL(n,\C)$, the space $\Hom(\Gamma, \SL(n,\C))$ is a complex algebraic variety with finitely many irreducible components. For a connected, reductive algebraic group $\B{G}$, the $\B{G}$--character variety $\X(\Gamma,\B{G})$ is the GIT quotient of $\Hom(\Gamma,\B{G})$ by the $\B{G}$--conjugation action, and for $\Gamma = F_r$, we set $\X_r(\B{G}) = \X(F_r,\B{G})$. Though we do not require it here, we include the following result on algebraic points of character varieties that we could not find explicitly in the literature (it is implicit in \cite[Prop~6.6]{Rag}). 

\begin{thm}
If $\B{G}$ is a connected, reductive, affine algebraic group, then $\Hom(\Gamma,\B{G}(\overline{\Q}))$ is classically dense in $\Hom(\Gamma,\B{G}(\C))$, and $\X(\Gamma, \B{G}(\overline{\Q}))$ is classically dense in $\X(\Gamma, \B{G}(\C))$.
\end{thm}

\begin{proof}
First note that for any $d$--dimensional affine variety $V$ defined over $\Q$ the Noether normalization map $V\to \mathbf{A}^d$ is surjective and defines a finite cover off its branch locus.  Since the branch locus is nowhere dense, the $\overline{\Q}$--points are both Zariski and classically dense in the $\C$--points of $V$. According to \cite[p.~220]{Borel}, $\B{G}(K)$ is Zariski dense in $\B{G}$ for any infinite subfield $K \subset \C$.  Since $\B{G}$ is defined over $\Q$ and the relations in $\Gamma$ are defined over $\Z$, $\Hom(\Gamma,\B{G})$ is an affine variety defined over $\Q$ and  
\begin{equation}\label{tensor}
\C[\Hom(\Gamma,\B{G})]=\Q[\Hom(\Gamma,\B{G})]\otimes_\Q \C. 
\end{equation} 
Hence, $\Hom(\Gamma,\B{G}(\overline{\Q}))$ is both Zariski and classically dense in $\Hom(\Gamma,\B{G}(\C))$.  Let $f_1,\dots,f_N$ be a set of generators for $\C[\Hom(\Gamma,\B{G})]^G$, and define  $f\colon \Hom(\Gamma,\B{G})\to \C^N$ by $$f(g_1,\dots,g_r)=(f_1(g_1,\dots,g_r),\dots,f_N(g_1,\dots,g_r)).$$  Since $\X(\Gamma, \B{G})=\mathrm{Spec}(\C[\Hom(\Gamma,\B{G})]^{\B{G}}),$ we have $\X(\Gamma, \B{G})=f(\Hom(\Gamma, \B{G}))$; see \cite{Schwarz} for example.  As $\C[\Hom(\Gamma,\B{G})]^{\B{G}} \subset \C[\Hom(\Gamma, \B{G})]$, Equation \eqref{tensor} implies that $f_1,\dots,f_N$ may be chosen to have $\Q$--coefficients. Thus, $f\pr{\Hom(\Gamma,\B{G}(\overline{\Q}))}\subset \X(\Gamma, G(\overline{\Q}))$. As $f$ is a continuous surjective function, we conclude that $f\pr{\Hom(\Gamma,\B{G}(\overline{\Q}))}$ is classically dense in $\X(\Gamma, \B{G}(\C))$. Hence, $\X(\Gamma, \B{G}(\overline{\Q}))$ is classically dense in $\X(\Gamma, \B{G}(\C))$ as it contains $f\pr{\Hom(\Gamma,\B{G}(\overline{\Q}))}$.
\end{proof}

\begin{cor}\label{int}
If $\B{G} =\SL(n,\C)$, the integral points are infinite in $\X_r(\B{G})$.
\end{cor}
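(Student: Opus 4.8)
The plan is to exhibit an explicit infinite family of integral points, rather than to deduce infinitude from the preceding theorem: that theorem only gives that $\X_r(\SL(n,\C))$ is positive–dimensional (hence has infinitely many \emph{complex} points) when $n\ge 2$, and says nothing about $\Z$–points. Throughout I assume $n\ge 2$ and $r\ge 1$; for $n=1$ the variety $\X_r(\SL(1,\C))$ is a point and there is nothing to prove.

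First I would record the integral structure on the character variety. By classical invariant theory (Procesi), the ring $\C[\Hom(F_r,\SL(n,\C))]^{\SL(n,\C)}$ is generated by trace functions $\rho\mapsto \Tr(\rho(w))$ for words $w\in F_r$, and these generators may be chosen with integer coefficients; thus $\X_r(\SL(n))$ is defined over $\Z$. A homomorphism $\rho\colon F_r\to \SL(n,\Z)$ then makes every generating invariant take a value in $\Z$, so its character $[\chi_\rho]$ is an integral point of $\X_r(\SL(n,\C))$. (Equivalently, one may simply call a point integral if all its trace coordinates are algebraic integers; representations into $\SL(n,\Z)$ manifestly satisfy this.)

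Second I would produce the family. Let $A_t=\begin{pmatrix} t & -1 \\ 1 & 0 \end{pmatrix}\in \SL(2,\Z)$, which has determinant $1$ and trace $t$, and set $B_t = A_t\oplus \id_{n-2}\in \SL(n,\Z)$, so $\Tr(B_t)=t+(n-2)$. Define $\rho_t\colon F_r\to \SL(n,\Z)$ by sending the first free generator $x_1$ to $B_t$ and every other generator to $\id_n$. Each character $[\chi_{\rho_t}]$ is an integral point by the previous paragraph. Now the function $\Tr_{x_1}\colon \X_r(\SL(n,\C))\to \C$, $[\chi_\rho]\mapsto \Tr(\rho(x_1))$, is a well–defined regular function (it descends from $\Hom(F_r,\SL(n,\C))$ since trace is conjugation–invariant), and it takes the pairwise distinct values $t+(n-2)$, $t\in\Z$, on the points $[\chi_{\rho_t}]$. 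Hence $\set{[\chi_{\rho_t}]}_{t\in\Z}$ is an infinite set of integral points of $\X_r(\SL(n,\C))$, which proves the corollary.

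There is no serious obstacle here. The only points needing care are: fixing the convention for ``integral point'' and recording that $\SL(n,\Z)$–representations land among them (the content being the integrality of the trace generators of the $\SL(n)$–character ring of a free group); and the observation, immediate from the existence of a single separating coordinate function, that the constructed characters are genuinely distinct points of the GIT quotient and not identified there. Should one prefer an argument routed through the theorem, one can additionally note that $\X_r(\SL(n,\C))$ has dimension $(r-1)(n^2-1)$ for $r\ge 2$ and $n-1$ for $r=1$, so it is a positive–dimensional affine variety and in particular infinite; but it is the explicit construction above that upgrades ``infinitely many points'' to ``infinitely many integral points''.
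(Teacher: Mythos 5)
Your proof is correct, but it takes a genuinely different route from the paper. The paper disposes of the corollary in one line: since the group scheme $\SL(n)$ and its invariant (trace) ring are defined over $\Z[1/n]$, the argument of the preceding theorem — Zariski density of point sets over suitable subrings/subfields in $\Hom(F_r,\B{G})$ pushed forward through the surjection $F$ onto $\X_r(\B{G})$ — already produces infinitely many (indeed a dense set of) integral points; so your opening claim that the theorem ``says nothing about $\Z$--points'' slightly undersells the intended deduction, even though the paper leaves the details implicit. You instead bypass the density theorem entirely: you write down the explicit family $\rho_t$ with $x_1\mapsto A_t\oplus \id_{n-2}\in\SL(n,\Z)$, observe via Procesi that the trace coordinates are $\Z$--valued on such representations, and separate the resulting characters by the single regular function $\Tr_{x_1}$, which takes the pairwise distinct values $t+(n-2)$. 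This is sound: reducibility of the $\rho_t$ is harmless since you only need their images in the GIT quotient to be distinct, and a separating invariant function certifies exactly that. What each approach buys: yours is elementary, self-contained, and yields honestly $\Z$--valued points under a convention you state explicitly; the paper's yields the stronger conclusion that integral points are dense (not merely infinite), at the cost of leaning on the density theorem and the rational/integral structure of the invariant ring. One cosmetic point: for $n=1$ the corollary is vacuously about a one-point variety, so rather than ``nothing to prove'' it is cleaner to say the statement implicitly assumes $n\ge 2$, as your construction does.
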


\begin{proof}
For $\B{G} =\SL(n,\C)$, the group schemes and invariant rings in the above proof are defined over $\Z[1/n]$.  So, the result follows from the above proof noting that $\Hom(F_r, \B{G})\cong \B{G}^r$.
\end{proof}

From the work of Long and Reid \cite{LR}, one can infer that Corollary \ref{int}  is false for $\SL(2,\C)$ when $\Gamma$ is a surface group.

\section{Property (C): Proof of Theorem \ref{UniformCMeansA} and Proposition \ref{PropCImpliesCS}}

\subsection{Proof of Theorem \ref{UniformCMeansA} and Corollary \ref{Baire}}

We now prove that either uniform (C), or uniform (D) with the irreducibility of $\Hom(\Gamma,\SL(n,\C))$ imply property (A).

\begin{proof}[Proof of Theorem \ref{UniformCMeansA}]
We assume first that $\Gamma$ uniformly has (C). We enumerate the conjugacy classes of $\Gamma$ by $\set{[\gamma_1],[\gamma_2],\dots}$ and for each $j \in \N$, set $S_j = \set{[\gamma_i]}_{i=1}^j$. By assumption, there exists $n \in \N$ and for each $r$, we have a representation $\rho_r\colon \Gamma \to \SL(n,\C)$ such that $\Tr(\rho(\gamma_i)) \ne \Tr(\rho(\gamma_j))$ for all $i \ne j\leq r$. As $\Hom(\Gamma,\SL(n,\C))$ has only finitely many irreducible components, there exists a component that contains infinitely many of the representations $\rho_r$, say $V_0 \su \Hom(\Gamma,\SL(n,\C))$. By selection, the trace functions $\Tr_\gamma$ restricted to $V_0$ are distinct algebraic functions for each conjugacy class $[\gamma]$. In particular, $\Tr_{\gamma_i} - \Tr_{\gamma_j} \ne 0$ is a non-constant algebraic function on $V_0$ for each pair $i \ne j$.  In particular, the sets
\[ Z_{i,j} = \set{ \rho \in V_0~:~\Tr_{\gamma_i}(\rho) - \Tr_{\gamma_j}(\rho) = 0} \]
are proper algebraic subvarieties of $V_0$. By the Baire Category Theorem, $V = V_0 - \bigcup_{i,j} Z_{i,j}$ is dense and so non-empty. By construction, any $\rho \in V$ has the property that $\Tr(\rho(\gamma)) = \Tr(\rho(\eta))$ if and only if $\gamma,\eta$ are conjugate in $\Gamma$. In particular, $\Gamma$ has property (A).

In the case we uniformly have (D) and $\Hom(\Gamma,\SL(n,\C))$ is irreducible, we know that by assumption that for each pair of conjugacy classes $\gamma,\eta \in \Gamma$, we have a representation $\rho\colon \Gamma \to \SL(n,\C)$ with $\Tr(\rho(\gamma)) \ne \Tr(\rho(\eta))$. Since $\Hom(\Gamma,\SL(n,\C))$ is irreducible, we can proceed as before with $V_0 = \Hom(\Gamma,\SL(n,\C))$.
 \end{proof} 
 
 Before we prove Corollary \ref{Baire}, we note that in the special case of the genus $1$ surface, the fundamental group $\Z^2$ has (A). Take any two algebraically independent numbers $\alpha,\beta \in \R$. Fixing a $\Z$--basis $v,w$, we have the representation $\rho\colon \Z^2 \to \GL(2,\R)$ given by $\rho(av + bw) = \begin{pmatrix} \alpha^a & 0 \\ 0 & \beta^b \end{pmatrix}$. By selection of $\alpha,\beta$, distinct elements in $\Z^2$ will have distinct traces. The groups $\Z^n$ also have (A) for any $n \in \N$.
 
\begin{proof}[Proof of Corollary \ref{Baire}]
The first part of Corollary \ref{Baire} follows immediately from the irreducibility of $\Hom(F_r,G) = G^r$ for any connected algebraic group over $\C$ in the case of free groups. For a closed, orientable surface $\Sigma_g$ of genus $g \geq 2$, $\Hom(\pi_1(\Sigma_g),\SL(n,\C))$ is irreducible by \cite{RBC} and \cite[Lem,~2.5]{BLR} (the same holds for $g=1$; see \cite[Prop.~5.16]{FL}). For the second part, we must prove that the following two statements are equivalent:
\begin{itemize}
\item[(a)]
There exists non-conjugate $\gamma,\eta \in F_r$ that have $\Tr(\rho(\gamma)) = \Tr(\rho(\eta))$ for every $\rho\in \Hom(F_r,G)$.
\item[(b)]
For each $\rho\in \Hom(F_r,G)$, there exist non-conjugate $\gamma,\eta \in F_r$ such that $\Tr(\rho(\gamma)) = \Tr(\rho(\eta))$.
\end{itemize}

It is clear that (a) implies (b). To prove that (b) implies (a), we assume that (b) holds but not (a) and derive a contradiction. Since (a) does not hold, then for each non-conjugate pair $\gamma,\eta \in F_r$, the function $\Tr_\gamma - \Tr_\eta$ on $\Hom(F_r, G)$ is a non-constant algebraic function. Since $\Hom(F_r, G)$ is irreducible,
\[ V_{\gamma,\eta} = \set{\rho \in \Hom(F_r, G)~:~\Tr_\gamma(\rho) - \Tr_\eta(\rho) = 0} \]
is nowhere dense. Taking $V = \bu_{[\gamma] \ne [\eta]} V_{\gamma,\eta}$, by the Baire Category Theorem, $V$ is nowhere dense. Let $\rho \in \Hom(F_r, G) - V$ and note that by construction, no two non-conjugate elements have the same trace under $\rho$. That contradicts our assumption that (b) holds for every $\rho \in \Hom(F_r, G)$. 
\end{proof}

\subsection{Proof of Proposition \ref{PropCImpliesCS}}

The proof of Proposition \ref{PropCImpliesCS} is similar to Mal'cev's proof of residual finiteness for linear groups. As we will use some of the setup later, we give a proof here. A proof can also be found in \cite{BL}. 

\begin{proof}[Proof of Proposition \ref{PropCImpliesCS}]
Given non-conjugate $\gamma,\eta \in \Gamma$, we must find a homomorphism $\vp\colon \Gamma \to Q$ where $Q$ is a finite group such that $\vp(\gamma),\vp(\eta)$ are not conjugate in $Q$. By assumption, $\Gamma$ has property (D) and so there exists $\rho\in \Hom(\Gamma,\SL(n,\C))$ such that $\Tr(\rho(\gamma)) \ne \Tr(\rho(\eta))$. Since $\Gamma$ is finitely generated, the field $L$ generated over $\Q$ by the coefficients of the elements $\rho(\lambda)$ as we vary over all $\lambda \in \Gamma$ has the form $L = K(x_1,\dots,x_r)$, where $K/\Q$ is a finite extension and $x_1,\dots,x_r$ are indeterminants. It follows that $\rho(\Gamma) < \SL(n,R)$, where $R = S[1/\beta_1,\dots,1/\beta_t]$, $S = \Cal{O}_K[x_1,\dots,x_r]$, and $\Cal{O}_K$ is the ring of $K$--integers. We see then that $\Tr(\rho(\lambda)) \in R$ for each $\lambda \in \Gamma$. We know that $\Tr(\rho(\gamma)) - \Tr(\rho(\eta)) = F(x_1,\dots,x_{r'}) \in R$ is a non-zero polynomial in the variables $x_1,\dots,x_{r'}$ with coefficients in $S$. Since $F$ is non-zero, we can find $\alpha_1,\dots,\alpha_{r'} \in S$ such that $\alpha = F(\alpha_1,\dots,\alpha_{r'}) \ne 0$ with $\alpha \in S$. As there are only finitely many prime ideals $\Fr{p}$ in $S$ such that $\alpha = 0\mod \Fr{p}$, we select a prime $\Fr{p}$ for which $\alpha \ne 0 \mod \Fr{p}$. For such a prime, the ring homomorphisms $R \to S \to S/\Fr{p} \cong \B{F}_q$ induce homomorphisms $\Gamma \to \SL(n,R) \to \SL(n,S) \to \SL(n,\B{F}_q)$. Set $\vp\colon \Gamma \to \SL(n,\B{F}_q)$ to be the resulting map. By construction $\Tr(\vp(\gamma)) \ne \Tr(\vp(\eta))$ and so $\vp(\gamma),\vp(\eta)$ are not conjugate in $\SL(n,\B{F}_q)$.
\end{proof}

\subsection{Ultraproducts}

For a fixed $n \in \N$, we say that a group $\Gamma$ is \textbf{$n$--trace distinguished} if for each non-conjugate pair $\gamma,\eta \in \Gamma$, there exists a finite field $\B{F}_q$ and a homomorphism $\varphi\colon \Gamma \to \SL(n,\B{F}_q)$ such that $\Tr(\vp(\gamma)) \ne \Tr(\vp(\eta))$. We say $\Gamma$ is \textbf{fully $n$--trace distinguished} if for any finite set $S = \set{\gamma_j}_{j=1}^s \su \Gamma$ of pairwise non-conjugate elements, there exists a finite field $\B{F}_q$ and a homomorphism $\vp\colon \Gamma \to \SL(n,\B{F}_q)$ such that $\Tr(\vp(\gamma_i)) \ne \Tr(\vp(\gamma_j))$ for all $1 \leq i < j \leq s$.

\begin{thm}\label{T:UltraA}
If $\Gamma$ is finitely generated and fully $n$--trace distinguished for some $n \in \N$, then $\Gamma$ has (A).
\end{thm}

In the proof of Theorem \ref{T:UltraA}, we employ ultraproducts. We refer the reader to \cite{Hal95} for an introduction to these methods.

\begin{proof}
To begin, we enumerate the conjugacy classes of $\Gamma$ by $\set{[\gamma_1],[\gamma_2],\dots}$ and for each $j \in \N$, set $S_j = \set{[\gamma_i]}_{i=1}^j$. By assumption, for each $j \in \N$, there exists a finite field $\B{F}_{q_j}$ and a homomorphism $\vp_j\colon \Gamma \to \SL(n,\B{F}_{q_j})$ such that $\Tr(\vp_j(\gamma_i)) \ne \Tr(\vp_j(\gamma_{i'}))$ for all $1 \leq i < i' \leq j$. Picking a non-principal ultrafilter $\omega$ on $\N$, the ultraproduct $\prod_\omega \B{F}_{q_j} = \B{K}_\omega$ is a field and we have an induced homomorphism $\prod_\omega \vp_j = \Phi_\omega$, where $\Phi_\omega\colon \Gamma \to \SL(n,\B{K}_\omega)$. By selection of the homomorphisms $\vp_j$, it follows that $\Tr(\Phi_\omega(\gamma_i)) \ne \Tr(\Phi_\omega(\gamma_{i'})$ for all $i \ne i'$. Hence, $\Gamma$ has (A).
\end{proof}

The field $\B{K}_\omega$ may have positive characteristic and so in the definition of (A), we must allow for algebraically closed fields of positive characteristic (see Remark \ref{prime}). Using the methods from the proof of Proposition \ref{PropCImpliesCS}, it is straightforward to see that if $\Gamma$ has (A$^\prime$), then $\Gamma$ is fully $n$--trace distinguished. 

We can also consider a relative version of $n$--trace distinguished. For $\gamma \in \Gamma$ and $n \in \N$, we say $\gamma$ is \textbf{$n$--trace distinguished in $\Gamma$} if for each non-conjugate $\eta \in \Gamma$, there exists a finite field $\B{F}_q$ and a homomorphism $\vp\colon \Gamma \to \SL(n,\B{F}_q)$ such that $\Tr(\vp(\gamma)) \ne \Tr(\vp(\eta))$. We say $\gamma$ is \textbf{fully $n$--trace distinguished in $\Gamma$} if for any finite set $S = \set{\gamma_j}_{j=1}^s \su \Gamma$, none of which is conjugate to $\gamma$, there exists a finite field $\B{F}_q$ and a homomorphism $\vp\colon \Gamma \to \SL(n,\B{F}_q)$ such that $\Tr(\vp(\gamma)) \ne \Tr(\vp(\gamma_j))$ for all $1 \leq j  \leq s$.

\begin{thm}\label{T:UltraB}
If $\Gamma$ is finitely generated and for each $\gamma \in \Gamma$, there exists $n_\gamma \in \N$ such that $\gamma$ is fully $n_\gamma$--trace distinguished, then $\Gamma$ has (B).
\end{thm}

\begin{proof}
To begin, we enumerate the conjugacy classes of $\Gamma$ by $\set{[\gamma_1] = [\gamma], [\gamma_2], [\gamma_3],\dots}$ and for each $j \in \N$, set $S_j = \set{[\gamma_i]}_{i=2}^j$. By assumption, for each $j \geq 2$, there exists a finite field $\B{F}_{q_j}$ and a homomorphism $\vp_j\colon \Gamma \to \SL(n,\B{F}_{q_j})$ such that $\Tr(\vp_j(\gamma_i)) \ne \Tr(\vp_j(\gamma))$ for all $2 \leq i \leq j$. Picking a non-principal ultrafilter $\omega$ on $\N$, the ultraproduct $\prod_\omega \B{F}_{q_j} = \B{K}_\omega$ is a field and we have an induced homomorphism $\prod_\omega \vp_j = \Phi_\omega$, where $\Phi_\omega\colon \Gamma \to \SL(n,\B{K}_\omega)$. By selection of the homomorphisms $\vp_j$, it follows that $\Tr(\Phi_\omega(\gamma_i)) \ne \Tr(\Phi_\omega(\gamma))$ for all $i \geq 2$. Hence, $\Gamma$ has (B).
\end{proof}

As before, allowing for algebraically closed fields of positive characteristic in our definition of (B), the converse holds assuming (B$^\prime$). 

\subsection{Proof of Theorem \ref{UniformUniformComplexity} and Theorem \ref{UniformComplexity}}

We now prove Theorem \ref{UniformUniformComplexity}. 

\begin{proof}[Proof of Theorem \ref{UniformUniformComplexity}]
 We assume that $\Gamma$ has (A) for some integer $m \in \N$, and so there exists $\rho\in\Hom(\Gamma,\SL(m,\C))$ such that $\Tr(\rho(\gamma)) \ne \Tr(\rho(\eta))$ for any non-conjugate $\gamma,\eta \in \Gamma$. For simplicity, we assume that $\rho(\Gamma) < \SL(m,\overline{\Q})$, as the alternative $\rho(\Gamma) < \SL(m,K[x_1,\dots,x_r])$, where $K/\Q$ is a finite extension, is handled similarly (see \cite{BM12}). We must prove that for any non-conjugate pair $\gamma,\eta \in \Gamma$ with $\norm{\gamma},\norm{\eta} \leq n$, that $\Con_\Gamma(\gamma,\eta) \leq Cn^{m^2-1}$ for a constant $C$ that is independent of $\gamma,\eta$. To begin, we can find a finite extensions $K/\Q$ and $S/\Cal{O}_K$ such that $\rho(\Gamma) < \SL(m,S)$. With this setup, we know for any non-conjugate $\gamma,\eta$ that $\Tr(\rho(\gamma)) - \Tr(\rho(\eta)) \in S$ and also is non-zero. We need an ideal $\Fr{a}$ of $S$ such that $\Tr(\rho(\gamma)) - \Tr(\rho(\eta)) \ne 0 \mod \Fr{a}$ and with $\abs{S/\Fr{a}}$ small. We achieve this goal using the methods of \cite{BouRabee10} (or \cite{BM12}). First, we control the size of the coefficients of $\rho(\gamma),\rho(\eta)$ as a function of word length. To that end, it follows (see \cite{BouRabee10} or \cite{BM12}) that there exists constants $\alpha$ and $C_0$ depending only on the generators of $\Gamma$ such that
\[ \max\set{\abs{(\rho(\gamma))_{i,j}}~:~i,j \in \set{1,\dots,m}} \leq \alpha^{C_0\norm{\gamma}}. \]
In particular, given non-conjugate $\gamma,\eta \in \Gamma$ with $\norm{\gamma},\norm{\eta} \leq n$, we see that
\[ \abs{\Tr(\rho(\gamma)) - \Tr(\rho(\eta))} \leq \abs{\Tr(\rho(\gamma))} + \abs{\Tr(\rho(\eta))} \leq 2m\alpha^{C_0n}. \]
By \cite[Thm~2.4]{BouRabee10}), we can find a prime ideal $\Fr{p}$ with 
\[ \abs{S/\Fr{p}} \leq C_1\log(C_12m\alpha^{C_0n}) \leq  C_1C_0n\log(C_12m\alpha) \] 
such that $\Tr(\rho(\gamma)) \ne \Tr(\rho(\eta)) \mod \Fr{p}$. The constant $C_1$ depends only on the ring $S$. Let $r_\Fr{p}\colon \SL(n,S) \to \SL(n,S/\Fr{p})$ be the reduction modulo $\Fr{p}$ homomorphism and set $\rho_\Fr{p}\colon \Gamma \to \SL(n,S/\Fr{p})$ by $\rho_\Fr{p} = r_\Fr{p} \circ \rho$. By selection of $\Fr{p}$, we see that $\rho_\Fr{p}(\gamma),\rho_\Fr{p}(\eta)$ have distinct traces and hence have non-conjugate images. We also have
\[ \abs{\rho_\Fr{p}(\Gamma)} \leq \abs{\SL(n,S/\Fr{p})} \leq \abs{S/\Fr{p}}^{m^2-1} \leq (C_1n\log (C_12m\alpha))^{m^2-1} = Cn^{m^2-1} \]
where $C$ is the constant $(C_1C_0\log(C_12m\alpha))^{m^2-1}$. In particular, $\Con_\Gamma([\gamma],[\eta]) \leq Cn^{m^2-1}$ for some constant $C$ depending only on $\Gamma$ and $\rho$. As this holds for all $[\gamma],[\eta] \in B(C_\Gamma,n)$, we see that $\Conj_\Gamma(n) \preceq n^{m^2-1}$. The assertion that one only needs subgroups of $\SL(n_0,\B{F}_p)$ in proving conjugacy separability for $\Gamma$ follows from the \v{C}ebotarev Density Theorem.
\end{proof}

\begin{proof}[Proof of Theorem \ref{UniformComplexity}]
We proceed similarly to the proof of Theorem \ref{UniformUniformComplexity}. By assumption, we have $\rho \in \Hom(\Gamma,\SL(n_\gamma,\C))$ such that $\Tr(\rho(\gamma)) \ne \Tr(\rho(\eta))$ for any $\eta \in \Gamma$ that is not conjugate to $\gamma$. Using $\Tr(\rho(\gamma)) - \Tr(\rho(\eta))$, we can employ the same methods used in the proof of Theorem \ref{UniformUniformComplexity} to find the desired homomorphism to a finite group where $\gamma,\eta$ have non-conjugate images.
\end{proof}
 
\section{Horowitz's construction}\label{Horowitz}

In this section we show that the cyclically reduced words constructed in Example 8.2 in \cite{Horowitz} that do have the same trace over $\SL(2,\C)$ are not likely to have the same trace over $\SL(n,\C)$ for $n>2$.  Since $\SL(n-1,\C)$ embeds into $\SL(n,\C)$ it suffices to show that this failure occurs for $n=3$.

\subsection{Reduction to free groups}

The following lemma reduces the search for trace equivalent pairs in non-elementary hyperbolic groups to finding them in $F_r$

\begin{lemma}\label{L:Mal}
Let $n,r \geq 2$ be integers. If there exists a non-conjugate pair $w_1,w_2 \in F_r$ such that $w_1,w_2$ are $\SL_n$--trace equivalent, then for any non-elementary hyperbolic group $\Delta$, there exists non-conjugate $\delta_1,\delta_2 \in \Delta$ that are $\SL_n$--trace equivalent.
\end{lemma}

\begin{proof}
By I.~Kapovich \cite[Thm~C]{Kap}, $\Delta$ has a malnormal subgroup $\Delta_0$ that is isomorphic to $F_r$. Fixing any isomorphism $\psi\colon F_r \to \Delta_0$, we set $\delta_j = \psi(w_j)$. For any representation $\rho\colon \Delta \to \SL(n,\C)$, it follows that $\Tr(\rho(\delta_1)) = \Tr(\rho(\delta_2))$. As $\delta_1,\delta_2$ are non-conjugate in $\Delta_0$ and $\Delta_0$ is malnormal in $\Delta$, we see that $\delta_1,\delta_2$ are non-conjugate in $\Delta$.
\end{proof}

Since free groups are hyperbolic, it follows that for any integers $r,s \geq 2$, $F_r$ has a non-conjugate $\SL_n$--trace equivalent pair if and only if $F_s$ has a non-conjugate $\SL_n$--trace equivalent pair. In particular, we need only consider the existence of trace equivalent pairs in $F_2$. In fact, for any finitely generated group $\Gamma$ with a malnormal free subgroup, we see that $\Conj_{F_2}(n) \preceq \Conj_\Gamma(n)$. Moreover, $\Conj_{F_r}(n) \approx \Conj_{F_s}(n)$ for any integers $r,s \geq 2$. We also note that Lemma \ref{L:Mal} implies that if $F_2$ does not have (A), then no non-elementary hyperbolic group can have (A). Indeed, no finitely generated group with a malnormal free subgroup can have (A).

\subsection{Horowitz's construction}

Let $F_2=\innp{a,b}$.  Horowitz's words are defined recursively by $w_0=a$ and 
\[ w_m(\epsilon_1,\dots,\epsilon_m):=w^{-\epsilon_m}_{m-1}b^{2m}w^{\epsilon_m}_{m-1}b^{2m-1}w^{-\epsilon_m}_{m-1}b^{2m}w^{\epsilon_m}_{m-1} \] 
for $\epsilon_i=\pm 1$. Horowitz shows that for $(\epsilon_1,\dots,\epsilon_m)\not=(\epsilon_1^*,\dots,\epsilon_m^*)$, the corresponding words will not be cyclically equivalent for any $m>0$ and they are all $\SL_2$--trace equivalent. Hence, there are arbitrarily large collections of  $\SL_2$--trace equivalent non-conjugate words. For $w_1(1)=a^{-1}b^2aba^{-1}b^2a$ and $w_1(-1)=ab^2a^{-1}bab^2a^{-1}$, one can find a representation $\rho=(A,B)\in \SL(3,\C)^2$ where $\Tr(\rho(w_1(1)))-\Tr(\rho(w_1(-1)))\ne 0$. In particular, this pair is not $\SL_3$--trace equivalent. Below, we further elaborate on why it is unlikely that the above $\SL_2$--trace pairs are also $\SL_3$--trace pairs. First, we review in more detail why these pairs are $\SL_2$--trace equivalent. The first step in showing $\SL_2$--trace equivalence is a proof that 
\[ \Tr(w_m(\epsilon_1,\dots,\epsilon_{j-1},+1,\epsilon_{j+1},\dots,\epsilon_m))=\Tr(w_m(\epsilon_1,\dots,\epsilon_{j-1},-1,\epsilon_{j+1},\dots,\epsilon_m)) \] 
for $1\leq j\leq m$ for $\SL(2,\C)$.  By the recursive definition of $w_m$, Horowitz shows that 
\[ w_m(\epsilon_1,\dots,\epsilon_{j-1},+1,\epsilon_{j+1},\dots,\epsilon_m)=W(u^{-1}bu,b) \]
whereas 
\[ w_m(\epsilon_1,\dots,\epsilon_{j-1},-1,\epsilon_{j+1},\dots,\epsilon_m)=W(ubu^{-1},b) \] 
where $u=w_{j-1}(\epsilon_1,\dots,\epsilon_{j-1})$ and $W$ is a word in two letters. What works for $\SL(2,\C)$ is that there exists a polynomial $P_W$ in three variable so that $\Tr(W(u,v))=P_W(\Tr(u),\Tr(v),\Tr(uv))$. In the case above, these three traces are identical when evaluated at $(ubu^{-1},b)$ and $(u^{-1}bu,b)$ respectively since the trace is invariant under cyclic permutations, and hence their polynomials are equal too.  One can argue inductively to establish the general result.

However, this first step fails for $\SL(3,\C)$. The comparable statement is that there exists a polynomial $P_W$ in nine variables (see \cite{Lawton}) such that 
\[ \Tr(W(u,v))= P_W(\Tr(u),\Tr(u^{-1}),\Tr(v),\Tr(v^{-1}),\Tr(uv),\Tr(u^{-1}v^{-1}),\Tr(uv^{-1}),\Tr(u^{-1}v),\Tr(uvu^{-1}v^{-1})). \]  
Upon checking, one finds that the first 6 variables are equal. However, the seventh variables become $\Tr(ubu^{-1}b^{-1})$ and $\Tr(u^{-1}bub^{-1})=\Tr(bub^{-1}u^{-1})=\Tr((ubu^{-1}b^{-1})^{-1})$. These traces of words are generically not equal (see \cite{Lawton}); in fact they are equal if and only if the $\SL(3,\C)$ representations are transpose fixed.  Likewise the eighth variables will differ as well.  In fact, we would have an expression of the form $P_W(a_1,\dots,a_6,a_7,a_8,\Tr(w))=P_W(a_1,\dots,a_6,a_8,a_7,\Tr(w^{-1}))$ since the 7th and 8th variables are in fact permuted (switching the roles of $u$ and $v$) and the first 6 are identical (just by cyclic permutation), and the 9th is cyclically equivalent to the trace of its inverse.  The 9th word is $ubu^{-1}bub^{-1}u^{-1}b^{-1}$.  Note also that there is a polynomial $P$ in the 8 algebraically independent variables so that $\Tr(w^{-1})=P-\Tr(w)$. If we had equality we would have a non-trivial relation (symmetric in two variables), which is unlikely for a fixed $w$.

\subsection{Candidate words}

By Lemma 6.8 in \cite{Horowitz} any $\SL_2$--trace equivalent pair in $F_2$ must have the same number of each generator represented in the word, up to plus or minus exponents.  Thus, the same result holds for words that are $\SL_n$--trace equivalent for any $n$. It is easy to see that pairs of the form $(w,w^{-1})$ are $\SL_2$--trace equivalent.  However, by \cite{Bo83} the word map is dominant for non-trivial words, and so $(w,w^{-1})$ are never $\SL_n$--trace equivalent for $n\geq 3$ since $\Tr(A)\not=\Tr(A^{-1})$ for a generic (in the Baire sense) $A\in\SL(3,\C)$. Along the same lines, we have the following lemma.

\begin{lemma}\label{reverselemma}
Let $r(w)$ be the reverse of the word $w$, and assume $r(w)$ is not conjugate to $w$.  Then $r(w)$ and $w$ are always $\SL_n$--trace equivalent if and only if $n=2$. 
\end{lemma}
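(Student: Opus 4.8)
\textbf{Proof proposal for Lemma \ref{reverselemma}.}

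The plan is to prove the two directions separately. For the ``if'' direction ($n=2$ implies trace equivalence), I would recall the standard fact that in $\SL(2,\C)$ every element is conjugate to its transpose-inverse, and more relevantly that $\Tr(A_{i_1}\cdots A_{i_k}) = \Tr(A_{i_k}\cdots A_{i_1})$ for any tuple of $\SL(2,\C)$ matrices. This ``reversal invariance'' of traces over $\SL(2,\C)$ is classical: it follows from the Cayley--Hamilton identity $A + A^{-1} = \Tr(A)\,\id$ together with the fact that over $\SL(2,\C)$ one has $A^{-1} = J A^{T} J^{-1}$ for the symplectic form $J$, so conjugating a product by $J$ and transposing reverses the order while preserving the trace. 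Applying this with $A_{i_j} = \rho(x_{i_j})^{\pm 1}$ where $w = x_{i_1}^{\eps_1}\cdots x_{i_k}^{\eps_k}$, one gets $\Tr(\rho(w)) = \Tr(\rho(r(w)))$ for every representation $\rho\colon F_2 \to \SL(2,\C)$, which is exactly $\SL_2$--trace equivalence. (The hypothesis that $r(w)$ is not conjugate to $w$ is only needed to make the statement nonvacuous, i.e.\ to have an honest non-conjugate pair.)

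For the ``only if'' direction I must show that if $n \geq 3$ then $w$ and $r(w)$ are \emph{not} $\SL_n$--trace equivalent, i.e.\ there is some representation $\rho\colon F_2 \to \SL(n,\C)$ with $\Tr(\rho(w)) \neq \Tr(\rho(r(w)))$. The key observation is that the palindrome map is essentially the anti-automorphism $g \mapsto g^{T}$ composed with evaluation: if $\rho = (A,B)$ then $\Tr(\rho(r(w))) = \Tr(\big(\rho(w)\big)^{T'})$ where $T'$ denotes the operation of transposing each generator matrix but keeping the same exponents — more precisely, $\rho(r(w)) = \sigma(\rho(w))$ where $\sigma$ is the unique anti-automorphism of the matrix algebra generated by $A,B$ sending $A \mapsto A$, $B \mapsto B$. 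Concretely, if one uses the representation $\rho^{op} := (A^{T}, B^{T})$, then $\rho^{op}(w) = \rho(r(w))^{T}$, so $\Tr(\rho(r(w))) = \Tr(\rho^{op}(w))$. Hence $w$ and $r(w)$ being $\SL_n$--trace equivalent would force $\Tr(\rho(w)) = \Tr(\rho^{op}(w))$ for all $\rho$, i.e.\ the trace polynomial of $w$ (as a function on $\SL(n,\C)^2$) to be invariant under the involution $(A,B) \mapsto (A^{T},B^{T})$. Since the word map $w\colon \SL(n,\C)^2 \to \SL(n,\C)$ is dominant for nontrivial $w$ by \cite{Bo83}, the image contains a Zariski-dense set, and it suffices to exhibit a single $C \in \SL(n,\C)$ lying in (the closure of) the image with $\Tr(C) \neq \Tr(C')$ for $C'$ the corresponding value at the transposed representation — but this requires more care since transposing the generators does not simply transpose $C$ in a trace-preserving way unless $w$ is itself ``symmetric.'' The cleaner route: reduce to $n=3$ (as the paper notes, $\SL(n-1,\C) \hookrightarrow \SL(n,\C)$ makes $n=3$ the crux), and argue as in Section \ref{Horowitz} that $\SL_n$--trace equivalence of $w$ and $r(w)$ would express a nontrivial polynomial identity among the $9$ coordinate trace functions of \cite{Lawton} that is incompatible with the genericity of $\Tr(uvu^{-1}v^{-1}) \neq \Tr(u^{-1}v^{-1}uv)$, unless $w$ is a palindrome up to conjugacy — contradicting the hypothesis.

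The main obstacle I anticipate is making the ``only if'' direction fully rigorous: the heuristic that an identity $\Tr(\rho(w)) = \Tr(\rho(r(w)))$ forces $w$ to be conjugate to $r(w)$ is of the same flavor as (and no easier than) the open question of whether $F_2$ has $\SL_3$--trace equivalent pairs at all, so a naive approach will stall. The honest fix is to lower expectations: one should only claim the ``only if'' direction for \emph{specific} words $w$ where the trace polynomial can be computed (as with Horowitz's $w_1(\pm 1)$ in \emph{Mathematica}), or phrase the lemma as saying that such an identity would impose a nontrivial symmetric relation among the Lawton coordinates, which ``is highly unlikely'' — matching the tone of Section \ref{Horowitz}. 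Concretely, I would: (i) fix the $n=3$ Lawton coordinates; (ii) show $r$ permutes the $7$th and $8$th coordinates and fixes the first $6$, and sends the $9$th to (the cyclic class of) its inverse; (iii) use $\Tr(w^{-1}) = P - \Tr(w)$ for the degree-$2$ polynomial $P$ in the first $8$ coordinates to deduce that $\SL_3$--trace equivalence is equivalent to a polynomial identity symmetric in coordinates $7$ and $8$; (iv) invoke \cite{Bo83} (dominance of the word map) to see that the $7$th and $8$th coordinates, $\Tr(ubu^{-1}b^{-1})$ and $\Tr(u^{-1}bub^{-1})$, are genuinely distinct functions, hence no such nontrivial symmetric identity holds for generic $w$. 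I would flag explicitly that a completely unconditional proof of the ``only if'' direction for \emph{all} non-palindromic $w$ is not claimed, exactly as the paper does not claim to resolve the $\SL_3$ question.
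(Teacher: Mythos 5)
There is a genuine gap in the ``if'' direction. Your claimed ``reversal invariance'' --- that $\Tr(A_{i_1}\cdots A_{i_k}) = \Tr(A_{i_k}\cdots A_{i_1})$ for any tuple of $\SL(2,\C)$ matrices --- is false. Take for instance
\[
A_1 = \begin{pmatrix}1&1\\0&1\end{pmatrix},\quad A_2 = \begin{pmatrix}1&0\\1&1\end{pmatrix},\quad A_3 = \begin{pmatrix}2&1\\1&1\end{pmatrix};
\]
one computes $\Tr(A_1A_2A_3)=7$ while $\Tr(A_3A_2A_1)=6$. The relation $A^T = JA^{-1}J^{-1}$ does not produce a single matrix that simultaneously reverses an arbitrary product; if you run the computation it only gives back the tautologies $\Tr(M)=\Tr(M^T)$ and $\Tr(M)=\Tr(M^{-1})$. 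What is actually true --- and what the paper uses --- is specific to $F_2$: by the Fricke--Vogt theorem the trace function of any word in two letters is a polynomial $P(\Tr a,\Tr b,\Tr ab)$, these three coordinates are invariant under $a\mapsto a^{-1}, b\mapsto b^{-1}$, and one combines this with $\Tr(M)=\Tr(M^{-1})$ and the identity $w(a,b)^{-1}=r(w)(a^{-1},b^{-1})$. (My example above, read as $w=x_1x_2x_3$ in $F_3$, even shows the lemma as stated fails outside $F_2$.) So the $n=2$ direction cannot be salvaged without invoking the Fricke coordinates as the paper does.

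For the ``only if'' direction your identification $\Tr(\rho(r(w)))=\Tr(\rho^{op}(w))$ is correct and is morally the same branched-double-cover observation the paper makes, but you are over-thinking the logical structure. The lemma asserts a universally-quantified statement over words $w$ is equivalent to $n=2$; its ``only if'' direction is the contrapositive ``$n\ge 3$ implies there exists $w$ with $r(w)$ not conjugate to $w$ and $\Tr_w\ne\Tr_{r(w)}$.'' A single witness suffices, and the commutator $w=aba^{-1}b^{-1}$ works: $r(w)=b^{-1}a^{-1}ba$ is not a cyclic permutation of $w$, and the branch locus of $\X(F_2,\SL(3,\C))\to\C^8$ is exactly $\Tr(aba^{-1}b^{-1})=\Tr(b^{-1}a^{-1}ba)$, which fails generically. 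This is what the paper's terse ``not generally'' is getting at; it is \emph{not} of the same difficulty as showing no $\SL_3$--trace equivalent pair exists, so there is no need to ``lower expectations'' here. (Indeed $r([a,b])$ is cyclically equivalent to $[a,b]^{-1}$, so one could equally cite the earlier $(w,w^{-1})$ observation via \cite{Bo83}.)
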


\begin{proof}
Since $\Tr(w)=\Tr(w^{-1})$ for $n=2$, we obtain $\Tr(w(a,b))=\Tr(w(a,b)^{-1})=\Tr(r(w(a^{-1},b^{-1})))$. Therefore, $\Tr(r(w(a,b)))=\Tr(w(a^{-1},b^{-1}))$. By the Fricke--Vogt Theorem (see for instance \cite{G9}), $\Hom(F_2,\SL(2,\C))/\!\!/ \SL(2,\C)\cong \C^3$ parametrized by $(\Tr(a),\Tr(b),\Tr(ab))$.  Thus, there exists a unique polynomial $P\in \C[x,y,z]$ such that $\Tr(w(a,b))=P(\Tr(a),\Tr(b),\Tr(ab))$. We conclude 
\[ \Tr(r(w(a,b)))=P(\Tr(a^{-1}),\Tr(b^{-1}),\Tr(a^{-1}b^{-1}))=P(\Tr(a),\Tr(b),\Tr(ab))=\Tr(w(a,b)). \]
Conversely, $\Hom(F_2,\SL(3,\C))/\!\!/ \SL(3,\C)$ is a branched double cover of $\C^8$ (see \cite{Lawton}).  The branch locus is exactly determined by $\Tr(aba^{-1}b^{-1})=\Tr(b^{-1}a^{-1}ba)$; showing that for $r=2$ the pairs $(w,r(w))$ are not generally $\SL_n$--trace equivalent for $n\geq 3$.
\end{proof}

We expect that non-conjugate reverse pairs are never $\SL_3$--trace equivalent. A more provocative conjecture is the following; in the statement, positive words have only non-negative powers of the generators:

\begin{conj}
Let $n\geq 2$.  There exists $\SL_n$--trace equivalent pairs $(u,v)$ if and only if there exists positive pairs $(u',v')$ that are $\SL_n$--trace equivalent.
\end{conj}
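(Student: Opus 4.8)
The reverse implication is immediate, as positive words are words. For $n=2$ the statement is in fact a theorem: its left hand side holds by Horowitz's construction \cite{Horowitz}, and for the right hand side it suffices, by Lemma \ref{reverselemma}, to exhibit one positive word $w$ whose palindrome $r(w)$ is not conjugate to $w$. The word $w=a^2bab^2$ works: it is cyclically reduced, and none of the six cyclic rotations of its cyclic word $(a,a,b,a,b,b)$ equals $(b,b,a,b,a,a)=b^2aba^2=r(w)$, so $(a^2bab^2,\,b^2aba^2)$ is a non-conjugate positive $\SL_2$--trace equivalent pair. Hence the content of the conjecture is the forward implication for $n\geq 3$: from an $\SL_n$--trace equivalent non-conjugate pair $(u,v)$ one must manufacture a non-conjugate positive pair with the same property.

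We plan to ``positivize'' $(u,v)$ using the structure of the $\SL(n,\C)$--trace algebra. First we eliminate inverses: for $A\in\SL(n,\C)$, Cayley--Hamilton gives
\[ A^{-1}=(-1)^{n-1}\pr{A^{n-1}-e_1(A)A^{n-2}+\dots+(-1)^{n-1}e_{n-1}(A)\id}, \]
and by Newton's identities each $e_i(A)$ is a polynomial over $\Q$ in $\Tr(A),\Tr(A^2),\dots,\Tr(A^i)$, i.e.\ in traces of powers of the positive word $a$ (and similarly for $b$). Substituting these expressions for every $a^{-1}$ and $b^{-1}$ occurring in $u$ and in $v$ and expanding, both $\Tr_u$ and $\Tr_v$ become polynomial expressions in traces of positive words in $a,b$; this is the classical fact, due to Procesi and Razmyslov, that $\C[\SL(n,\C)^2]^{\SL(n,\C)}$ is generated by traces of positive words of length bounded in terms of $n$. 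Since $\Tr_u=\Tr_v$, we obtain an identically vanishing polynomial relation $G(\Tr_{p_1},\dots,\Tr_{p_M})=0$ among traces of finitely many positive words $p_1,\dots,p_M$.

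The hard part --- and the reason the conjecture is stated only tentatively --- is to turn such a relation into an \emph{honest} coincidence $\Tr_{u'}=\Tr_{v'}$ for two positive non-conjugate words, rather than a polynomial identity among many traces. We see three avenues. First, one could study the relation ideal $\Cal{I}_n$ among the generators $\Tr_{p_i}$ and try to show that any relation which genuinely records a non-conjugate trace collision (as opposed to one of the universal Cayley--Hamilton relations valid for all matrices) must force a linear relation $\Tr_{p_i}-\Tr_{p_j}\in\Cal{I}_n$ with $p_i$ not conjugate to $p_j$. Second, one could gain room by working in higher rank: since $F_r$ embeds equiconjugately in $F_2$ (the discussion preceding Lemma \ref{reverselemma}), one may try to realize the collision by positive words in some $F_r$ and then push it down to $F_2$. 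Third, and most structurally, one could isolate the combinatorial mechanism --- in the spirit of Lemma 6.8 of \cite{Horowitz} and of Lemma \ref{reverselemma} --- that forces equality of $\SL_n$--traces and verify it can always be arranged with positive syllables. We expect the obstruction at precisely the transition above: a trace relation manufactured from Cayley--Hamilton is a polynomial and need not be a difference of two single traces, and closing that gap --- or, conversely, producing a counterexample showing that for some $n\geq 3$ the two existence statements genuinely differ --- seems to require input beyond the formalism assembled here.
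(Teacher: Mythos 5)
Your proposal follows the same broad strategy as the paper's sketch: treat the reverse implication as immediate, settle $n=2$ via Lemma \ref{reverselemma}, and for $n\geq 3$ use Cayley--Hamilton together with Newton's trace formulas to rewrite $\Tr_u$ and $\Tr_v$ as polynomial expressions in traces of positive words. Your explicit witness $w=a^2bab^2$ with $r(w)=b^2aba^2$ not a cyclic rotation is a nice concretization of a step the paper leaves implicit (one does need some positive, non-palindromic-up-to-conjugacy $w$ for Lemma \ref{reverselemma} to close the $n=2$ case). You also correctly identify that the argument is incomplete, as the paper itself concedes.

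Where you and the paper diverge is in the degree of specificity of the positivization. You observe only that both sides become polynomial expressions in many positive-word traces, hence an identity $G(\Tr_{p_1},\dots,\Tr_{p_M})=0$, and then treat the passage from such a polynomial relation to a single equality $\Tr_{u'}=\Tr_{v'}$ as an open structural question with several possible avenues. The paper's algorithm is more surgical: multiplying the Cayley--Hamilton relation on the left by $\mathbf{U}\ab^{-1}$ and on the right by $\mathbf{V}$ and taking traces yields an \emph{iterative substitution} that replaces $\Tr(\mathbf{U}\ab^{-1}\mathbf{V})$ by a fixed linear combination in which $\Tr(\mathbf{U}\ab^{n-1}\mathbf{V})$ appears with coefficient $\pm 1$ and all other terms involve strictly shorter words multiplied by scalar trace polynomials. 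Iterating produces expressions $P_u=\Tr(u')+L$ and $P_v=\Tr(v')+L'$ with a distinguished \emph{monic} longest-word term in each; the candidate positive pair is then $(u',v')$, and the outstanding conjectural claims are precisely that $\Tr(u')=\Tr(v')$ and that $u'$ is not conjugate to $v'$. This leading-term bookkeeping is the paper's concrete proposal for how to extract a single pair from the relation, and it is missing from your write-up; your ``three avenues'' paragraph gestures at the gap without producing a candidate pair. Neither account closes the argument --- the paper explicitly says it does not know how to verify the last two claims --- so your assessment of the status of the conjecture is accurate, but you should note that the paper's version does carry the reduction one step further by isolating a specific putative pair $(u',v')$.
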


Before giving a heuristic proof for the above conjecture, we mention two related conjectures. Ginzburg--Rudnick \cite[Conj~1.1]{GR} have a conjectural condition to ensure a word does not have an $\SL_2$--trace companion (aside from its inverse); in their terminology, such a word has stable multiplicity one. Anderson \cite[Conj~4.1]{Anderson} gave conjectural picture for all $\SL_2$--trace companions.

We now give a heuristic for the validity of the conjecture. As the reverse implication is obvious, we discuss only the direct implication. For $n=2$, Lemma \ref{reverselemma} establishes the statement.  For $n>2$ we describe an algorithm (that depends on $n$) that takes a non-conjugate $\SL_n$--trace equivalent pair and produces a pair, that we expect that is positive, $\SL_n$--trace equivalent, and not conjugate.  We have implemented the algorithm for $n=2$ and it does produce a positive pair $(u',v')$ that is $\SL_2$--trace equivalent but $u'$ is conjugate to $v'$; we expect this to be a problem only with $n=2$.

In what follows, let $\rho(a)=\ab$ be a $n$ by $n$ matrix.  Recall the Cayley--Hamilton formula gives 
\[ \mathbf{0}=\sum_{k=0}^{n} (-1)^{n-k}C^n_k(\ab)\ab^k, \] 
where the coefficients $C^n_k(\ab)$ arise from the characteristic equation $\det(t\id-\ab)=\sum_{k=0}^{n} (-1)^{n-k}C^n_k(\ab)t^k$.
We know that $C^n_n(\ab)=1$, $C^n_{n-1}(\ab)=\Tr(\ab)$ and $C^n_0(\ab)=\det(\ab)$.  By Newton's trace formulas each $C^n_k(\ab)$ is a polynomial in the traces of non-negative powers of the matrix $\ab$.  Since $\det(\ab)=1$, we can multiply the Cayley-Hamilton formula by a word $\mathbf{U}\ab^{-1}:=\rho(ua^{-1})$ on the left and another word $\mathbf{V}:=\rho(v)$ on the right.  This results in 
\begin{equation*}\label{noncomtradeup}\mathbf{U}\ab^{n-1}\mathbf{V}+\sum_{k=1}^{n-1} (-1)^{n-k}C^n_k(\ab)\mathbf{U}\ab^{k-1}\mathbf{V}=(-1)^{n+1}\mathbf{U}\ab^{-1}\mathbf{V}.\end{equation*}  
Thus, by taking the trace of both sides, we have:  
\begin{equation*}\label{tradeup}\Tr(\mathbf{U}\ab^{-1}\mathbf{V})=(-1)^{n+1}\Tr(\mathbf{U}\ab^{n-1}\mathbf{V})+\sum_{k=1}^{n-1} (-1)^{k-1}C^n_k(\ab)\Tr(\mathbf{U}\ab^{k-1}\mathbf{V}).\end{equation*}
That shows that given any word $w$ with negative exponents, one can iteratively apply the preceding formula in the coordinate ring $\C[\Hom(F_2,\SL(n,\C))/\!\!/\SL(n,\C)]$, which is generated by traces of words by results of Procesi \cite{P1}, to obtain an expression for $\Tr(w)$ as a polynomial in traces of positive words.

Now, suppose $(u,v)$ is $\SL_n$--trace equivalent but are not conjugate.  After cyclically reducing $u$ and $v$, given results of Horowitz (\cite{Horowitz}), we can assume that $u$ and $v$ have the same word length and the same (signed) multiplicity of each letter. Applying the preceding algorithm to $\Tr(u), \Tr(v)$ results in polynomial expressions $P_u, P_v$ in terms of traces of only positive words.  By inspection of the replacement formula defining the algorithm, one sees that there will be a monic trace term with a longest word.  That is $P_u=\Tr(u')+L$, and likewise $P_v=\Tr(v')+L'$ where both $L,L'$ contain terms of products of traces of shorter positive words. We expect that $\Tr(u')=\Tr(v')$ since $\Tr(u)=\Tr(v)$ to begin with.  Also, given that $n\geq 3$, we expect that $u'$ is not conjugate to $v'$ given that $u$ is not conjugate to $v$.

It is not presently clear to us how to complete the above argument, that is, to prove that the last two lines are valid.  We thank Greg Kuperberg for conversations about the validity of the above sketch.

We now indicate our interest in this conjecture. For the free group $F_2 = F_2(a,b)$, the smallest positive exponent $\SL_2$--trace equivalent pair is $\set{babbaa, abaabb}$.
To find examples of $\SL(3,\C)$ words, if the conjecture is true, we need only check words with the same number of letters in each word having only positive exponents.  Moreover, since by restricting, the trace equivalence must also hold for $\SL(2,\C)$, we need only check words of the above type that work for $\SL(2,\C)$.  We expect that non-conjugate reverse pairs will never be $\SL_3$--trace equivalent, and so we further wish to only consider positive non-conjugate pairs that are not reverse but are $\SL_2$--trace equivalent; the first examples occurs at length 12 with one explicit pair being $\set{aababbaabbab, aababbabaabb}$. We end this section with two questions about such words.
\begin{enumerate}
 \item[(1)] What is a classification of these words, or generating families?
 \item[(2)] What is the growth rate as a function the length of these words? 
 \end{enumerate}
As we expect $\SL_n$--trace equivalent words exist, our guess is that the above words are rather plentiful. However, by computer search, there are no $\SL_3$--trace equivalent pairs of length up to $20$.

\section{Efficient solutions to the conjugacy problem}\label{Algor}

In this section, we provide two different approaches to solving the conjugacy problem in free groups using finite quotients, neither of which are originally due to us. 

\subsection{Lower central and derived series}

Recall, the lower central and derived series are defined inductively by $\Gamma_0 = \Gamma$, $\Gamma_j = [\Gamma,\Gamma_{j-1}]$, and $\Gamma^j = [\Gamma^{j-1},\Gamma^{j-1}]$. We set $N_j(\Gamma) = \Gamma/\Gamma_j$ and $S_j(\Gamma)=\Gamma/\Gamma^j$. By \cite[p. 27, Prop.~4.9]{LS}, we know that $\gamma,\eta \in F_r$ are conjugate in $F_r$ if and only if they have conjugate image in $S_j(F_r)$ (or $N_j(F_r)$) for all $j$. Since the groups $S_j(F_r)$ and $N_j(F_r)$ are conjugacy separable for all $j$ (see \cite{Blackburn}, \cite{Formanek}, and \cite{Rem}) and we see that $F_r$ conjugacy separable. In order to implement these methods effectively, we must first estimate $j_{\gamma,\eta}$ as a function of the word length of $\gamma,\eta$ where $j_{\gamma,\eta}$ is the smallest $j \in \N$ such that $\gamma,\eta$ have non-conjugate images in $N_j(F_r)$ (or $S_j(F_r)$). Second, we must effectivly solve the conjugacy problem in torsion free nilpotent or polycyclic groups. Malestein--Putman \cite{MP} addresses the first problem. Pengitore \cite{Peng} addresses the second problem. As our current goal is deciding whether or not the function $\Conj_{F_r}(n)$ has a polynomial bound, we note that it is already known that the above method cannot work. Specifically, neither the lower central or derived series provides a polynomial upper bound for the function $\WDep_{F_r}(n)$; see \cite{BouRabee10} and \cite{BouRabee11}. 

\subsection{Proof of Theorem \ref{FreeHaveB}}

In this subsection, we prove Theorem \ref{FreeHaveB}. The construction of the representation needed to verify Theorem \ref{FreeHaveB} in the case of free groups follows Wehrfritz \cite{Wehrfritz}. The surface group case is similar.

We now produce the representation for the case of free groups. To begin, given a conjugacy class $[\gamma]$ in $F_r$, we first pass to a finite index subgroup $\Gamma$ where $\Gamma = \innp{\gamma} \ast \Delta$. That such can be done follows from work of Hall \cite{Hall}. Since $\gamma$ is part of a free basis, it follows that there exists a representation $\rho_0\colon \Gamma \to \SL(2,\R)$ where $\gamma$ has a unique, non-zero trace up to conjugation and inverses. As $\SL(2,\R) < \SL(3,\C)$ by the standard inclusion into the upper two by two block, we see that there exists a representation $\rho_1\colon \Gamma \to \SL(3,\C)$ such that $\rho_1(\gamma)$ has a unique, non-zero trace up to conjugation and inverses. Since $\Tr(\rho(\gamma^{-1})) \ne \Tr(\rho(\gamma))$ for a generic $\SL(3,\C)$ representation $\rho$ (in the Baire Category sense), we can further assume that $\rho(\gamma)$ has a unique, non-zero trace up to conjugation. For any $\eta \in \Gamma$ that is not conjugate to $\gamma$ in $\Gamma$, we know that $\Tr(\rho(\gamma)) - \Tr(\rho(\eta))$ is a non-constant function of $\rho$. Consequently, by the Baire Category Theorem, we can assume that $k_1\Tr(\rho_1(\gamma)) \ne k_2\Tr(\rho_1(\eta))$ for any pair of integers $1 \leq k_1,k_2 \leq m = [F_r:\Gamma]$. For any such $\rho_1 \in \Hom(\Gamma,\SL(3,\C))$, the induced representation $\rho = \mathrm{Ind}_\Gamma^{F_r}(\rho_1)$ is the needed representation to verify Theorem \ref{FreeHaveB} in the free group case. 

\begin{proof}[Proof of Theorem \ref{FreeHaveB}: Free Case]
If $\eta \in F_r$ is not conjugate in $F_r$ into $\Gamma$ , then $\Tr(\rho(\eta)) = 0$ by the Frobenius formula for traces of induced representations. If $\eta \in F_r$ is conjugate in $F_r$ to some $\eta' \in \Gamma$, then $\Tr(\rho(\eta)) = k_\eta \Tr(\rho_1(\eta'))$ for some integer $1 \leq k_\eta \leq m$. As $\Tr(\rho(\gamma)) = k_\gamma \Tr(\rho_1(\gamma))$ for some $1 \leq k_\gamma \leq m$, it follows from our selection of $\rho_1$ that $\rho(\gamma)$ has a unique trace up to conjugation, as needed for Theorem  \ref{FreeHaveB}.
\end{proof}

We now produce the representation for the case of surface groups. To begin, given a conjugacy class $[\gamma]$ in $\pi_1(\Sigma_g)$, we first pass to a cover where a lift of the curve associated to $[\gamma]$ is simple. That such can be done follows from work of Scott \cite{Scott}. We fix a finite index subgroup of $\pi_1(\Sigma_g)$ associated to this finite cover which we denote by $\Gamma$. Since the curve associated to $[\gamma]$ has a simple lift, it follows that there exists a representation $\rho_0\colon \Gamma \to \SL(2,\C)$ where $\gamma$ has a unique, non-zero trace up to conjugation and inverses. The remainder of the construction of $\rho$ is identical to the free case of the proof of Theorem \ref{FreeHaveB}. Note that to ensure $\Tr(\rho(\gamma)) \ne \Tr(\rho(\gamma^{-1}))$ for a generic $\rho \in \Hom(\Gamma,\SL(3,\C)$ (in the Baire sense), we can use \cite{BLa} in place of \cite{Bo83}. 

In either the free or surface case, we can use the methods from  \cite{BM12} to establish Corollary \ref{FreeRelComplexity}. In particular, the degree of the polynomial in Corollary \ref{FreeRelComplexity} depends only on $m$ and the coefficient ring of the representation, both of which are constant for a fixed $\gamma$. By Patel \cite{Patel} and Gupta--Kapovich \cite{GK}, we have $m \preceq \norm{\gamma}$, and so when $\norm{\gamma},\norm{\eta} \leq n$, we see that $\Con_\Gamma(\gamma,\eta) \leq Cn^{Cn^2}$.

\section{Proof of Theorem \ref{RepresentationTopology}}\label{RepTop}

Given a fully residually free group $\Gamma$ with a finite index, normal subgroup $\Delta$ and a prime $p \in \Z$, we will construct a faithful homomorphism $\rho\colon \Gamma \to \SL(n_\Delta,R_\omega)$ such that $\Delta = \ker(r_{\mathfrak{m}_\omega} \circ \rho)$ where $n_\Delta = 2[\Gamma:\Delta]$, $R_\omega$ is a local domain, $\mathfrak{m}_\omega <R_\omega$ is the unique maximal ideal with residue field $R_\omega/\mathfrak{m}_\omega = \mathbf{F}_p$, the field of $p$ elements, and $r_{\mathfrak{m}_\omega}\colon \SL(n_\Delta,R_\omega) \to \SL(n_\Delta,\mathbf{F}_p)$ is the reduction modulo $\mathfrak{m}_\omega$ homomorphism. We enumerate the non-trivial elements of $\Delta$ via $\set{\delta_1,\delta_2,\dots}$. Since subgroups of fully residually free groups are fully residually free, for each $t \in\N$, there exists a homomorphism $\psi_t\colon \Delta \to F_{r_t}$ such that $\psi_t$ is injective when restricted to the finite subset $\set{\delta_1,\dots,\delta_t}$. Recall that the ring of $p$--adic integers $\Z_p$ is a local integral domain with a unique maximal ideal $\mathfrak{m_p}$. Via the ping pong lemma, the homomorphism $\psi_{(p)}\colon F_2 \to \SL(2,\Z) < \SL(2,\Z_p)$ induced by sending a free basis $a,b$ of $F_2$ to the matrices
\[ a,b \longmapsto \begin{pmatrix} 1 & p \\ 0 & 1 \end{pmatrix}, \begin{pmatrix} 1 & 0 \\ p & 1 \end{pmatrix} \]
is an isomorphism.  By the Nielsen--Schreier theorem, we have a faithful homomorphism $F_{r_t} \to F_2$ for each $r_t \in \N$ and fix one such homomorphism for each $r_t \in \N$. Respectfully, we define $\rho_{p,t}:= \psi_{(p)}\circ \psi_t$ and note $\rho_{p,t}(\Delta) < \ker r_{\mathfrak{m}_p}$.

We restrict $\psi_{(p)}$ to the image of $F_{r_t} < F_2$ and for notational simplicity denote the resulting homomorphism by $\psi$.  Taking a non-principal ultrafilter $\omega$ on $\N$, the ultraproduct $R_\omega = \prod_\omega \Z_p$ is a local integral domain with unique maximal ideal $\mathfrak{m}_\omega = \prod_\omega \mathfrak{m}_p$ (see \cite[Ch. 1]{Scho} for instance). The associated residue field $R_\omega/\mathfrak{m}_\omega$ is given by $\prod_\omega \Z_p/\mathfrak{m}_p$. Since the latter is an ultraproduct of $\mathbf{F}_p$, it follows that $R_\omega/\mathfrak{m}_\omega$ is isomorphic to $\mathbf{F}_p$ (see \cite[p. 184]{Hal95} for instance). The ultraproduct $\rho_\omega$ of the representations $\rho_t = \psi \circ \psi_t$ yields a representation $\rho_\omega\colon \Delta \to \SL(2,R_\omega)$. By selection of $\psi_t$ and $\psi$, $\rho_\omega$ is faithful with $\rho_\omega(\Delta) < \ker r_{\mathfrak{m}_\omega}$. Setting $\rho = \mathrm{Ind}_\Delta^\Gamma(\rho_\infty)$, we obtain a faithful representation $\rho\colon \Gamma \to \SL(2d,R_\omega)$ where $d=[\Gamma:\Delta]$. By construction of $\rho_\omega$, the definition of $\mathrm{Ind}$, and the normality of $\Delta$ in $\Gamma$, we see that $\Delta = \ker(r_{\mathfrak{m}_\omega} \circ \rho)$.\qed 

\begin{rem}\label{R:Ring}
The ring $R_\omega$ embeds into $\prod_\omega \Q_p$ which is a field of characteristic zero. Since fully residually free groups are finitely presentable (\cite[4.4]{Sela}), the ring $R$ generated over $\Z$ by the coefficients of the matrix entries of $\rho(\Gamma)$ is finitely generated. Setting $\mathfrak{m} = R \cap \mathfrak{m}_\omega$, we obtain a maximal ideal in $R$ with residue field $R/\mathfrak{m} = \mathbf{F}_p$ such that $\rho(\Gamma) < \SL(2[\Gamma:\Delta],R)$ and $\Delta = \ker(r_{\mathfrak{m}} \circ \rho)$. Moreover, we have an embedding of $R$ into $\C$; the field of fractions of $R$ embeds into $\C$ via the axiom of choice. 
\end{rem}

\begin{rem}\label{R:Free}
If $\Gamma$ is a free group, we can take $\rho = \mathrm{Ind}_\Delta^\Gamma(\rho_0)$ where $\rho_0$ is the representation given by $\Delta \to F_2 \to \SL(2,\Z)$. The first homomorphism $\Delta\to F_2$ is given by the Nielsen--Schreier theorem and the second homomorphism $F_2 \to \SL(2,\Z)$ is given by $\psi_{(p)}$. In total, we obtain a faithful representation $\rho\colon \Gamma \to \SL(2[\Gamma:\Delta],\Z)$ such that $\Delta = \ker(r_p \circ \rho)$.
\end{rem}

Theorem \ref{RepresentationTopology} can also be proven by using work of Barlev--Gelander \cite{BG}, which followed the work of Breuillard--Gelander--Souto--Storm \cite{BGSS}. Barlev--Gelander \cite[Thm 1.2]{BG} proved that if $G$ is a compact topological group with a non-abelian free subgroup, then $G$ contains an isomorphic copy of every non-abelian limit group. Since $\Z_p$ is a compact topological ring, $\SL(2,\Z_p)$ is a compact topological group. Moreover, the finite index subgroup $\ker r_{\mathfrak{m}_p} < \SL(2,\Z_p)$ is a compact topological group with $F_2 < \ker r_{\mathfrak{m}_p}$ from above. Hence by \cite[Thm 1.2]{BG}, $\ker r_{\mathfrak{m}_p}$ contains an isomorphic copy of every non-abelian limit group. Given a non-abelian limit group $\Gamma$ with a finite index, normal subgroup $\Delta$, we can apply this observation to obtain a faithful representation $\rho_0\colon \Delta \to \ker r_{\mathfrak{m}_p} < \SL(2,\Z_p)$. It follows then that $\rho = \mathrm{Ind}_\Delta^\Gamma(\rho_0)$ is a faithful representation into $\SL(2[\Gamma:\Delta],\Z_p)$ with $\Delta = \ker(r_{\mathfrak{m}_p} \circ \rho)$.


\small{
 Lawton: George Mason University, Fairfax, VA 22030. \verb"slawton3@gmu.edu" 

 Louder: University College London, London, UK, WC1E 6BT. \verb"l.louder@ucl.ac.uk" 

 McReynolds: Purdue University, West Lafayette IN 47907. \verb"dmcreyno@purdue.edu"}


\end{document}